\theoremstyle{plain}
\newtheorem{lem}{Lemma}[section]
\newtheorem{cor}[lem]{Corollary}
\newtheorem{prop}[lem]{Proposition}
\newtheorem{thm}[lem]{Theorem}
\theoremstyle{definition}
\newtheorem{defn}[lem]{Definition}
\newtheorem{ex}[lem]{Example}
\newtheorem{notn}[lem]{Notation}
\newtheorem{fact}[lem]{Fact}
\newtheorem{step}{Step}
\theoremstyle{remark}
\newtheorem{case}{Case}
\newcommand{\ext}{\operatorname{Ext}}
\newcommand{\Hom}{\operatorname{Hom}}
\newcommand{\s}{\mathfrak{S}}
\newcommand{\tor}{\operatorname{Tor}}
\newcommand{\Cl}{\operatorname{Cl}}
\newcommand{\Ker}{\operatorname{Ker}}
\newcommand{\ideal}[1]{\mathfrak{#1}}
\newcommand{\m}{\ideal{m}}
\newcommand{\p}{\ideal{p}}
\newcommand{\q}{\ideal{q}}
\newcommand{\fa}{\ideal{a}}
\newcommand{\fb}{\ideal{b}}
\newcommand{\sfk}{\mathsf k}
\newcommand{\ti}{\tilde}
\newcommand{\bbz}{\mathbb{Z}}
\newcommand{\bbn}{\mathbb{N}}
\newcommand{\xra}{\xrightarrow}
\newcommand{\xla}{\xleftarrow}
\newcommand{\vf}{\varphi}
\newcommand{\ve}{\varepsilon}
\newcommand{\te}{\theta}
\newcommand{\gd}{\delta}
\newcommand{\gl}{\lambda}
\newcommand{\frakp}{\mathfrak{p}}
\newcommand{\frakq}{\mathfrak{q}}
\newcommand{\hhom}{\operatorname{Hom}}
\renewcommand{\geq}{\geqslant}
\renewcommand{\leq}{\leqslant}
\renewcommand{\ker}{\Ker}
\newcommand{\Ext}[4][R]{\operatorname{Ext}_{#1}^{#2}(#3,#4)}
\newcommand{\Otimes}[3][R]{#2\otimes_{#1}#3}
\renewcommand{\Hom}[3][R]{\operatorname{Hom}_{#1}(#2,#3)}	
\newcommand{\Tor}[4][R]{\operatorname{Tor}^{#1}_{#2}(#3,#4)}
\newcommand{\ssm}{\smallsetminus}
\numberwithin{equation}{lem}
\begin{document}

\bibliographystyle{amsplain}

\author{Sean K. Sather-Wagstaff}

\address{Department of Mathematical Sciences,
Clemson University,
O-110 Martin Hall, Box 340975, Clemson, S.C. 29634
USA}

\email{ssather@clemson.edu}

\urladdr{https://ssather.people.clemson.edu/}

\author{Tony Se}

\address{Department of Mathematics,
University of Mississippi,
Hume Hall 335, P.O. Box 1848, University, MS 38677
USA}

\email{ttse@olemiss.edu}

\urladdr{http://math.olemiss.edu/tony-se/}

\author{Sandra Spiroff}

\address{Department of Mathematics,
University of Mississippi,
Hume Hall 335, P.O. Box 1848, University, MS 38677
USA}

\email{spiroff@olemiss.edu}

\urladdr{http://math.olemiss.edu/sandra-spiroff/}

\thanks{
Sandra Spiroff was supported in part by Simons Foundation Collaboration Grant 245926.}

\title{Semidualizing modules of $2 \times 2$ Ladder Determinantal Rings}

\date{\today}

\dedicatory{}

\keywords{ladder determinantal ring, Gorenstein ring, semidualizing module}
\subjclass[2010]{13C20, 13B30, 13C40}

\begin{abstract} We continue our study of ladder determinantal rings over a field $\sfk$ from the perspective of semidualizing modules.  In particular, given a ladder of variables $Y$, we show that the associated ladder determinantal ring $\sfk[Y]/I_2(Y)$ admits exactly $2^n$ non-isomorphic semidualizing modules where $n$ is determined from the combinatorics of the ladder $Y$: the number $n$ is essentially the number of non-Gorenstein factors in a certain decomposition of  $Y$. From this, for each $n$, we show explicitly how to find ladders $Y$ such that $\sfk[Y]/I_2(Y)$ admits exactly $2^n$ non-isomorphic semidualizing modules. This is in contrast to our previous work, which demonstrates that large classes of ladders have exactly 2 non-isomorphic semidualizing modules.
\end{abstract}

\maketitle

\section{Introduction} \label{sec1}

Let $R$ be a commutative noetherian ring and let $\sfk$ be a field. We are interested in the question of how many non-isomorphic semidualizing modules the ring $R$ has, where a finitely generated $R$-module $C$ is \textbf{semidualizing} if $\Hom CC\cong R$ and $\Ext iCC=0$ for all $i\geq 1$. Examples include the free $R$-module of rank 1 and, if $R$ is local and Cohen-Macaulay, a canonical or dualizing module. This general class of modules was introduced by Foxby~\cite{foxby:gmarm} in part to understand Auslander and Bridger's G-dimension~\cite{auslander:smt} and non-finitely generated versions due to Enochs and Jenda~\cite{enochs:gipm}. 
The number of non-isomorphic semidualizing $R$-modules is finite when $R$ is local or a standard graded normal domain by Nasseh and Sather-Wagstaff~\cite{nasseh:gart,SSW}. In each of these cases, the number of these modules measures how far $R$ is from being Gorenstein.

The question \cite[Question~4.13]{SSW} that we address in this paper relates to the cardinality of the set of semidualizing modules $\frak S_0(R)$ of a Noetherian local ring $R$: Must it always be a power of 2?  We provide more evidence towards an affirmative answer in the case of ladder determinantal rings. These rings generalize the classical determinantal rings in a way that is useful, e.g., for studying Young tableaux~\cite{MR926272}.

Referring back to the detailed background section in \cite{SWSeSpP1}, we briefly recall that a {\bf ladder} is a subset $Y$ of an $m \times n$ matrix $X=(X_{ij})$ of indeterminates satisfying the property that if $X_{ij},X_{pq}\in Y$ satisfy $i\leq p$ and $j\leq q$, then $X_{iq},X_{pj}\in Y$.  Then $R_2(Y) = \sfk[Y]/I_2(Y)$ is the associated {\bf ladder determinantal ring} of 2-minors, as $I_2(Y)$ is the ideal generated by the $2 \times 2$ minors lying entirely in $Y$.  To avoid trivialities, we assume that $X$ is the smallest matrix containing $Y$ and that every variable of $Y$ is part of a $2 \times 2$ minor.  To wit, we consider only {\bf 2-connected} ladders; i.e., those ladders $Y$ satisfying the property that there do not exist two subladders $\varnothing \neq Z_1, Z_2 \subseteq Y$ such that $Z_1 \cap Z_2 = \varnothing$, $Z_1 \cup Z_2 = Y$, and every $2$-minor of $Y$ is contained in $Z_1$ or $Z_2$.  

We provide a construction to produce ladder determinantal rings with exactly $2^n$ semidualizing modules for any $n \in \mathbb N$, and we show that $|\frak S_0(R_2(Y))|$ is always, in fact, a power of 2. To describe our results explicitly, we use the corners of a ladder; see Definition~\ref{defn181011a} and the sample ladders below. If a lower and upper inside corner of a ladder $Y$ coincide, then we say that $Y$ has a coincidental inside corner.

\begin{table}[h]
  \begin{center}
    \begin{tabular}{c c c c c c c c c c c c c c c c c c c c c c c c c c}
     & \!\!\!\!\!$X_{12}$ & \!\!\!\!\!$X_{13}$ & & & & & &  \!\!\!\!\!$X_{12}$ & \!\!\!\!\!$X_{13}$ & \!\!\!\!\!$X_{14}$ & \!\!\!\!\!$X_{15}$ & & & & & & & & & \!\!\!\!\!$X_{12}$ & \!\!\!\!\!$X_{13}$ \\
      \!\!\!\!\!$X_{21}$ & \!\!\!\!\!$X_{22}$ & \!\!\!\!\!$X_{23}$ & & & & & & \!\!\!\!\!$X_{22}$ & \!\!\!\!\!$X_{23}$ & \!\!\!\!\!$X_{24}$ & \!\!\!\!\!$X_{25}$ & & & & & & & & & \!\!\!\!\!$X_{22}$ & \!\!\!\!\!$X_{23}$ \\
      \!\!\!\!\!$X_{31}$ & \!\!\!\!\!$X_{32}$ & \!\!\!\!\!$X_{33}$ & & & & & & \!\!\!\!\!$X_{32}$ & \!\!\!\!\!$X_{33}$ & \!\!\!\!\!$X_{34}$ & & & & & & & & & \!\!\!\!\!$X_{31}$ & \!\!\!\!\!$X_{32}$ & \!\!\!\!\!$X_{33}$ \\
       \!\!\!\!\!$X_{41}$ & \!\!\!\!\!$X_{42}$ & & & & & & \!\!\!\!\!$X_{41}$ & \!\!\!\!\!$X_{42}$ & \!\!\!\!\!$X_{43}$ & & & & & & & & & & \!\!\!\!\!$X_{41}$ & \!\!\!\!\!$X_{42}$ \\
        \!\!\!\!\!$X_{51}$ & \!\!\!\!\!$X_{52}$ & & & & & & \!\!\!\!\!$X_{51}$ & \!\!\!\!\!$X_{52}$ & \!\!\!\!\!$X_{53}$ & & & & & & & & & & \!\!\!\!\!$X_{51}$ & \!\!\!\!\!$X_{52}$ \\
        & & & &\\
        & $L_1$ & & & & & & & & $L_2$  & & & & & & & & & & & $L_3$ \\
    \end{tabular}
  \end{center}
\end{table}
\begin{tabular}{c c c c c c c c c c c c c c c}
& & {\it ladders with only non-coincidental} & & & & & & {\it a ladder with a}  \\
& & {\it inside corners} & & & & & & {\it coincidental inside corner}  \\
\end{tabular}
\bigskip

Our main result says that if $Y$ is a two-sided 2-connected ladder with $w$ coincidental inside corners, then the set of semidualizing modules has cardinality $2^n$, where $0 \leq n \leq w$. To be specific, we prove the following; see Notation~\ref{notn181011a} for an explanation of the symbol $Z_0 \# \cdots \# Z_w$.
\medskip

{\bf Main Theorem}  [See Theorem \ref{laddertype1s}.] {\it  Let $Y= Z_0 \# \cdots \# Z_w$ be a 2-connected ladder, where each $Z_u$ is a 2-connected ladder with no coincidental inside corners. Let $R=R_2(Y)$. Then $|\mathfrak S_0(R)| = \prod_{u=0}^w|\mathfrak S_0(R_2(Z_u))| =2^{\ve_0 + \dots + \ve_w}$, where $\ve_u=0$ if $R_2(Z_u)$ is Gorenstein and $\ve_u=1$ otherwise.}

\medskip
Moreover, since Gorenstein ladder determinantal rings determined by 2-minors are completely classsified, we have:

\medskip
{\bf Corollary} [See Corollary \ref{2N}.] {\it For any $N \in \mathbb N$, there exist ladders $Y$ such that $|\mathfrak S_0(R_2(Y))| = 2^N$.  In fact, infinitely many such ladders exist.}
\medskip

The paper is organized as follows.  We begin with a Background section which, after a brief review of the relevant terms, provides some material on Bass classes.  The results on Bass classes will allow us to establish a lower bound on the number of semidualizing modules of ladder determinantal rings constructed from ladders with coincidental inside corners; see Corollary~\ref{lowerbound}.  In Section~\ref{sec3}, we prove the Main Theorem.  We begin the section by providing several base cases of ladder determinantal rings with coincidental inside corners (called corners of type 1 in \cite{CoGor}).  This is necessary, since the ladder may take many different shapes, requiring careful consideration of each possibility.  We work up to the case that $Y = Z_1 \# Z_2$, where each $Z_i$ is a two-sided ladder with no coincidental inside corner (see Proposition~\ref{Ltype1L}).  Then we are in a position to prove the Main Theorem.

%%%%%%%%%%%%%%%%%%%%%%%%%%%%%%%%%%%%%%%%%%%%%%%%%%%%%%%%%

\section{Background-Brief Recap and Material on Bass Classes} \label{sec2}

%%%%%%%%%%%%%%%%%%%%%%%%%%%%%%%%%%%%%%%%%%%%%%%%%%%%%%%%%

Citing the detailed background section in \cite{SWSeSpP1}, we provide only a brief recap of the relevant terms and facts, before proceeding to the material on Bass classes.

%~~~~~~~~~~~~~~~~~~~~~~~~~~~~~~~~~~~~~~~~~~~~~~~~~~~~~~~~~~~~~~~~~~~~~~~
\subsection{Brief Recap of Relevant Terms and Results}
%~~~~~~~~~~~~~~~~~~~~~~~~~~~~~~~~~~~~~~~~~~~~~~~~~~~~~~~~~~~~~~~~~~~~~~~

\begin{defn}  The \textbf{divisor class group} of a normal domain $R$, denoted $\Cl(R)$, is the set of isomorphism classes of rank-1 reflexive modules, or equivalently, height-1 reflexive ideals.  Denoting a module class by $[M]$, the operations $[M]+[N]=[(M\otimes_RN)^{**}]$, where $(-)^*=\Hom -R$ and $[M]-[N]=[\Hom NM]$, with additive identity $[R]$, make $\Cl(R)$ into an abelian group. 
\end{defn}

\begin{defn} A semidualizing $R$-module of finite injective dimension is a \textbf{dualizing $R$-module}. If $R$ is Cohen-Macaulay, then a dualizing module is a \textbf{canonical module}. A ring $R$ admits only {\bf trivial semidualizing modules} if 
$$\s_0(R) =\begin{cases} \{[R], [\omega_R] \} & \text{if $R$ has a dualizing module $\omega_R$};\\
\{[R]\}&\text{otherwise.}\end{cases}$$  
When we see no danger of confusion, we write $M\in\s_0(R)$ instead of $[M]\in\s_0(R)$.
A {\bf semidualizing ideal} is an ideal of  $R$ that is semidualzing as an $R$-module.
\end{defn}

\begin{fact} \label{fact:sdm} The results below will be used repeatedly.
\begin{enumerate}[(1)]
\item \label{fact:multisiso}
If $\fa, \fb$ are semidualizing ideals and $\fa \otimes_R \fb$
is semidualizing, then the multiplication map $\mu: \fa \otimes_R \fb \to
\fa \fb$ is an isomorphism by \cite[Proposition~3.3]{SSW}.

\item\label{item181011a} If $R$ is a normal domain, then $\s_0(R)\subseteq\Cl(R)$ by \cite[Proposition~3.4]{SSW}.

\item \label{fact:tensorgivesomega} If $R$ is Cohen-Macaulay with (semi)-dualizing modules $C$, $\omega_R$, respectively, then
$\Hom C{\omega_R}$ is semidualizing.  Moreover, $\Otimes{\Hom C{\omega_R}}C\cong\omega_R$ via evaluation, 
and $\Tor i{\Hom C{\omega_R}}C =0$ for all $i\geq 1$. \cite[2.11; 4.4; 4.10]{christensen:scatac}.

\item \label{twins} If $R$ is a Cohen-Macaulay normal domain with $C,\omega$ as in part~\eqref{fact:tensorgivesomega}, and $C\neq\omega_R$ are height-1 reflexive ideals,
then $\Hom C{\omega_R}$ is naturally isomorphic to a height-1 reflexive ideal $C'$, and 
$\omega_R\xla{\cong}\Otimes C{C'} \xra[\mu]{\cong}CC'$.  Thus, $[C] + [C'] = [\omega_R]$.  Conversely, if $C'$ is a height-1 reflexive ideal such that $[C] + [C'] = [\omega_R]$, then $C' \cong \Hom{C}{\omega_R}$, and hence is semidualizing.
\end{enumerate}
\end{fact}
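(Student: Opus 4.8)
The plan is to treat parts~\eqref{fact:multisiso}--\eqref{fact:tensorgivesomega} as the cited black boxes they are and to assemble the genuine content, part~\eqref{twins}, from them together with the group structure on the divisor class group. Throughout, $R$ is a Cohen--Macaulay normal domain, so by Fact~\ref{fact:sdm}\eqref{item181011a} every semidualizing module is the class of a height-$1$ reflexive ideal, and I will use freely that the operations on $\Cl(R)$ are $[M]+[N]=[(\Otimes MN)^{**}]$ and $[M]-[N]=[\Hom NM]$ with identity $[R]$, where $(-)^*=\Hom{-}{R}$.

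First I would establish that $\Hom C{\omega_R}$ is semidualizing and identify it with an ideal. By Fact~\ref{fact:sdm}\eqref{fact:tensorgivesomega}, $\Hom C{\omega_R}$ is semidualizing, so by Fact~\ref{fact:sdm}\eqref{item181011a} its isomorphism class lies in $\Cl(R)$; choosing a height-$1$ reflexive ideal representative produces $C'$ with $C' \cong \Hom C{\omega_R}$. Next I would produce the chain $\omega_R \xla{\cong} \Otimes C{C'} \xra[\mu]{\cong} CC'$. The left isomorphism is immediate from Fact~\ref{fact:sdm}\eqref{fact:tensorgivesomega} applied to $C' \cong \Hom C{\omega_R}$, which gives $\Otimes{C'}{C} \cong \omega_R$ via evaluation together with the vanishing of the higher $\Tor{i}{\Hom C{\omega_R}}{C}$. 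For the right isomorphism, I note that $\Otimes C{C'} \cong \omega_R$ is itself semidualizing, being the dualizing module, so Fact~\ref{fact:sdm}\eqref{fact:multisiso} forces the multiplication map $\mu\colon \Otimes C{C'} \to CC'$ to be an isomorphism.

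With the chain in hand, the class equation follows: since $\omega_R$ is reflexive, $(\Otimes C{C'})^{**}\cong \omega_R^{**}\cong \omega_R$, so $[C]+[C'] = [(\Otimes C{C'})^{**}] = [\omega_R]$ in $\Cl(R)$. For the converse, suppose $C'$ is a height-$1$ reflexive ideal with $[C]+[C'] = [\omega_R]$. Applying the subtraction formula gives $[C'] = [\omega_R]-[C] = [\Hom C{\omega_R}]$, and since both sides are reflexive representatives of the same class, $C' \cong \Hom C{\omega_R}$, which is semidualizing by Fact~\ref{fact:sdm}\eqref{fact:tensorgivesomega}.

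The proof is essentially a direct assembly of the cited facts, so the only point requiring care --- and the closest thing to an obstacle --- is the bookkeeping that the class-group operations genuinely match the module-level isomorphisms: that $\Otimes C{C'}\cong\omega_R$ being already reflexive is what lets me drop the double dual in the formula for $[C]+[C']$, and that the subtraction formula $[M]-[N]=[\Hom NM]$ is exactly the inverse operation needed to recover $\Hom C{\omega_R}$ from the class equation in the converse. Once this matching is made precise, every other step is a verbatim invocation of Fact~\ref{fact:sdm}\eqref{fact:multisiso}, \eqref{item181011a}, and \eqref{fact:tensorgivesomega}.
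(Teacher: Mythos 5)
Your proposal is correct and matches the paper's intent: the paper states parts (1)--(3) as cited facts with no proof, and part (4) is exactly the assembly you describe --- part (3) supplies the semidualizing module $\Hom C{\omega_R}$ and the evaluation isomorphism, part (2) lets you represent it by a height-1 reflexive ideal $C'$, part (1) gives the isomorphism $\mu$, and the class-group operations $[M]+[N]=[(\Otimes MN)^{**}]$ and $[M]-[N]=[\Hom NM]$ yield the equation $[C]+[C']=[\omega_R]$ and its converse. Your bookkeeping (reflexivity of $\omega_R$ to drop the double dual, and the subtraction formula for the converse) is exactly the point that needs checking, and it checks out.
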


Let $Y$ be a ladder, as described in the Introduction. The associated \textbf{ladder determinantal ring} of $t$-minors is $R_t(Y) = \sfk[Y]/I_t(Y)$, where $I_t(Y)$ is the ideal generated by the $t \times t$ minors of $X$ lying entirely in $Y$. The ring $R_t(Y)$ is known to be Cohen-Macaulay by Herzog and Trung \cite[Corollary~4.10]{HT} and a normal domain by Conca \cite[Proposition~3.3]{Co}. Let $x_{ij}$ denote the residue of $X_{ij}\in Y$ in $R_t(Y)$.

\begin{defn}\label{defn181011a} The \textbf{lower inside corners}\footnote{We use A.~Conca's \cite{Co} notation/description for lower and upper.
Thus, $(i,j) \leq (h,k)$ if and only if $i \leq h {\text{ and }} j \leq k$.  In particular, $(1,1)$ is lowest since $(1,1) \leq (h,k)$ for all $h, k$.} of $Y$ are the points $(a,b)$ such that the variables $X_{ab}, X_{a-1 b},X_{a b-1} \in Y$, but $X_{a-1 b-1}  \in X \ssm Y$; these are denoted $X_{a_i b_i}$, or simply $(a_i, b_i)$, with $1 <a_1 <\cdots<a_h <m$. For notational convenience, we also set $(a_0,b_0)=(1,n)$ and $(a_{h+1},b_{h+1})=(m,1)$.  Likewise, the \textbf{upper inside corners} of a ladder $Y$ are the points $(c,d)$ such that $X_{cd}, X_{c+1 d}, X_{c d+1} \in Y$, but $X_{c+1 d+1}  \in X \ssm Y$; these are denoted $X_{c_j d_j}$, or simply $(c_j, d_j)$, with $1 <c_1 <\cdots<c_k <m$. 
The ladder $Y$ has \textbf{coincidental corners} if $(a_i,b_i)=(c_j,d_j)$ for some $i\in\{1,\ldots,h\}$ and $j\in\{1,\ldots,k\}$ \cite[Section 1]{SWSeSpP1}.
For notational convenience, we also set $(c_0,d_0)=(1,n)$ and $(c_{k+1},d_{k+1})=(m,1)$.
A ladder $Y$ is {\bf one-sided} if it is path-connected, and $h=0$ or $k=0$. A ladder $Y$ is {\bf two-sided} if it is path-connected and $h,k>0$ \cite[Definition 1.8]{SWSeSpP1}. 
\end{defn}

\begin{ex}  In the ladders shown in the Introduction, the lower/upper inside corners of each, respectively, are (2,2)/(3,2); (4,2)/\{(2,4),(3,3)\}; and (3,2)/(3,2). The ladder $L_3$ has a coincidental inside corner at $(3,2)$.
\end{ex}

The inside corners determine the rank of the free abelian group $\Cl(R_t(Y))$.  To describe how,
we use the height-1 prime ideals of $R_2(Y)$  shown below   \cite[\S 2]{Co}:
\begin{align*}
\p_i
&=(x_{pq}\in R_2(Y)\mid\text{$p\leq c_j$ and $q\leq d_j$})&j&=1,\ldots,k
\\
\q_i
&=(x_{a_{i-1}q}\in R_2(Y))&i&=1,\ldots,h+1
\\
\q_i'
&=(x_{pb_{i-i}}\in R_2(Y))&i&=1,\ldots,h+1.
\end{align*}

\begin{fact} \label{omega} The facts below were established in \cite[\S 2]{Co}.
\begin{enumerate}[(1)]
\item The set $\{[\q_1], \ldots, [\q_{h+1}], [\p_1], \ldots, [\p_k]\}$ is a basis of $\Cl(R_2(Y))$. 
\item The canonical class is described as $[\omega_R] = \sum_{i=1}^{h+1} \lambda_i [\q_i] + \sum_{j=1}^k \delta_j[\p_j]$, where 
$\lambda_i = a_i + b_i - a_{i-1} - b_{i-1}$ for all $i = 1, \dots, h+1$ and $\delta_j = a_{i_j} + b_{i_j}-c_j-d_j$ for all $j = 1, \dots, k$, where $i_j = \min\{i :  a_i > c_j \}$. 
\item The ideals $\q_i'$ are useful for computations.  In particular, $[\q_i] + [\q_i'] + \sum_{j \in I_i} [\p_j] = 0$ for all $i = 1, \dots, h+1$, where $I_i = \{j : 1 \leq j \leq k, (a_{i-1}, b_i) \leq (c_j,d_j) \}$, \cite[(i) in Proposition 2.3]{Co}. If $I_i = \varnothing$, then $[\q_i'] = -[\q_i]$.
\end{enumerate}
\end{fact}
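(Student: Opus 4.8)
My plan is to treat all three parts as assertions about the divisor class group of the normal domain $R=R_2(Y)$ and to establish them with a single engine: Nagata's localization sequence together with explicit principal-divisor computations, which is the route Conca follows in \cite{Co}. For part (1), I would first localize $R$ at a well-chosen variable $f=x_{ij}$. Since every defining relation is a $2\times 2$ minor $x_{ij}x_{kl}-x_{il}x_{kj}$, inverting a single entry lets one solve $x_{kl}=x_{il}x_{kj}/x_{ij}$ across the block it controls, so $R_f$ becomes (a localization of) a polynomial ring and hence $\Cl(R_f)=0$. Nagata's theorem then gives a surjection $\bigoplus_{\p}\bbz\,[\p]\onto\Cl(R)$, where $\p$ runs over the height-one primes minimal over $(f)$; identifying these with the ideals $\p_j,\q_i,\q_i'$ listed before the Fact shows these classes generate $\Cl(R)$, and the relations of part (3) eliminate the $\q_i'$ in favor of the $\q_i$ and $\p_j$. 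To upgrade ``generating set'' to ``basis,'' I would pair $\{[\q_1],\dots,[\q_{h+1}],[\p_1],\dots,[\p_k]\}$ against a suitable family of divisorial valuations and check that the resulting integer matrix has full rank, forcing $\bbz$-independence; this matrix is governed entirely by the corner combinatorics.

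For part (2) I would use that $R$ is Cohen--Macaulay, so that $\omega_R$ is a rank-one reflexive module and $[\omega_R]\in\Cl(R)$ is well defined; concretely $\omega_R\cong\ext^c_{\sfk[Y]}(R,\sfk[Y])$ up to a degree shift, with $c=\Ht I_2(Y)$ and $\sfk[Y]$ regular. After inverting $f$ as above, $\omega_{R_f}$ is free, so $[\omega_R]$ is again supported on the primes minimal over $(f)$. The coefficients $\lambda_i$ and $\delta_j$ are then recovered as the valuations of a generator of $\omega_R$ at each of these primes, and unwinding those multiplicities should produce exactly the stated expressions $\lambda_i=a_i+b_i-a_{i-1}-b_{i-1}$ and $\delta_j=a_{i_j}+b_{i_j}-c_j-d_j$ in terms of the corner coordinates.

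For part (3), which I would in fact prove first to avoid any circularity, I would compute the principal divisor of the single variable $x_{a_{i-1}b_{i-1}}$, which lies in row $a_{i-1}$ and in column $b_{i-1}$. The claim is that the only height-one primes containing it are $\q_i$, $\q_i'$, and those $\p_j$ with $j\in I_i$, each with multiplicity one, so that $\dvsr(x_{a_{i-1}b_{i-1}})=\q_i+\q_i'+\sum_{j\in I_i}\p_j$ as divisors; passing to classes and using that principal divisors vanish in $\Cl(R)$ yields $[\q_i]+[\q_i']+\sum_{j\in I_i}[\p_j]=0$, with the degenerate case $I_i=\varnothing$ giving $[\q_i']=-[\q_i]$. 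The main obstacle throughout is the bookkeeping: one must verify that the listed primes are \emph{exactly} the minimal primes over $(f)$ (and over the chosen variables), each carrying the correct valuation, and that the surviving classes are $\bbz$-independent. For a general two-sided ladder this requires a careful case analysis of how the upper and lower inside corners cut $Y$ into rectangular blocks, and it is precisely this combinatorial heart of Conca's computation in \cite{Co} that makes the three parts nontrivial rather than formal.
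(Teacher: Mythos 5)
First, the relevant context: the paper gives no proof of this statement at all --- it is labeled a Fact and quoted directly from \cite[\S 2]{Co} --- so there is no in-paper argument to compare against. Your general engine (Nagata's exact sequence after inverting variables, principal divisors of well-chosen entries, the canonical class read off from valuations) is indeed the standard route behind Conca's computation, so the outline points in the right direction. But two of your concrete steps would fail as written.

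(i) For a general two-sided ladder, inverting a single entry $x_{ij}$ does \emph{not} give $\Cl(R_f)=0$. The identity $x_{kl}=x_{il}x_{kj}/x_{ij}$ is available only when all four entries lie in $Y$ and the corresponding $2$-minor is actually a generator of $I_2(Y)$, so one inversion only linearizes the largest rectangle of $Y$ containing $(i,j)$; entries beyond an inside corner remain bound by genuine $2$-minors. For example, in the ladder $L_2$ of the Introduction, inverting $x_{22}$ leaves the relation $x_{41}x_{52}-x_{42}x_{51}$ intact and $\Cl(R_{x_{22}})\cong\bbz$. One must iterate the localization corner by corner (or induct on $h+k$), and since the listed primes number $2(h+1)+k$ while the rank is $h+1+k$, the $h+1$ relations of part (3) must be fed in before the independence check you describe. (ii) In part (3) you compute the divisor of the wrong variable. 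Granted, the paper's ``$b_{i-i}$'' is a typo, but the definition of $I_i$ via the point $(a_{i-1},b_i)$ pins it down: $\q_i'$ is the column ideal of column $b_i$, and the relation $[\q_i]+[\q_i']+\sum_{j\in I_i}[\p_j]=0$ is the principal divisor of $x_{a_{i-1}b_i}$, the variable at the lower \emph{outside} corner between consecutive lower inside corners. Your element $x_{a_{i-1}b_{i-1}}$ lies in $\p_j$ only when $d_j\geq b_{i-1}$, which is strictly stronger than the condition $d_j\geq b_i$ defining $I_i$, so its divisor omits every $\p_j$ with $b_i\leq d_j<b_{i-1}$ and cannot yield the stated relation. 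The sanity check is the generic $m\times n$ matrix ($h=k=0$): there the relation $[\q_1]+[\q_1']=0$ is $\dvsr(x_{11})=(\text{row }1)+(\text{column }1)$ with $x_{11}=x_{a_0b_1}$, not $\dvsr(x_{1n})$ with $x_{1n}=x_{a_0b_0}$.
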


Finally, since we are interested in (non)-Gorenstein rings, we note:

\begin{fact} \label{Gorprop} 
The ring $R_2(Y)$ is Gorenstein if and only if $m = n$ and all inside corners $(i, j)$ of $Y$ satisfy $i + j = m + 1$ \cite[Proposition 2.5]{Co}.  In particular, if $Y$ is an $m \times n$ {\it matrix} and $m, n > 1$, then $R_2(Y)$ is Gorenstein if and only if $m = n$ \cite[Corollary~8.9]{bruns:dr}.
\end{fact}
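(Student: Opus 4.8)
The plan is to reduce the Gorenstein property to the vanishing of the canonical class in $\Cl(R)$ and then read the answer directly off the formula recalled in Fact~\ref{omega}. Since $R=R_2(Y)$ is a Cohen--Macaulay normal domain, it is Gorenstein exactly when its (graded) canonical module is free of rank one, i.e. when $[\omega_R]=0$ in $\Cl(R)$. By Fact~\ref{omega}(1) the classes $[\q_1],\dots,[\q_{h+1}],[\p_1],\dots,[\p_k]$ form a $\bbz$-basis of $\Cl(R)$, so $[\omega_R]=0$ if and only if every coefficient in the expansion given in Fact~\ref{omega}(2) vanishes; that is, $\lambda_i=0$ for all $i=1,\dots,h+1$ and $\delta_j=0$ for all $j=1,\dots,k$.

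First I would analyze the conditions $\lambda_i=0$. Because $\lambda_i=(a_i+b_i)-(a_{i-1}+b_{i-1})$, requiring all of them to vanish says precisely that the antidiagonal sum $a_i+b_i$ is independent of $i$ for $i=0,1,\dots,h+1$. Invoking the boundary conventions $(a_0,b_0)=(1,n)$ and $(a_{h+1},b_{h+1})=(m,1)$, constancy of this sum forces $1+n=m+1$, hence $m=n$, with common value $m+1$; thus each lower inside corner $(a_i,b_i)$ satisfies $a_i+b_i=m+1$. Conversely, if $m=n$ and every lower inside corner lies on the antidiagonal $a_i+b_i=m+1$, then the two boundary sums also equal $m+1$, the sequence is constant, and each $\lambda_i$ vanishes.

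Granting the $\lambda_i$-conditions, the $\delta_j$-conditions become transparent: once $a_i+b_i\equiv m+1$, the term $a_{i_j}+b_{i_j}$ in $\delta_j=a_{i_j}+b_{i_j}-c_j-d_j$ equals $m+1$ no matter what the index $i_j=\min\{i:a_i>c_j\}$ turns out to be, so $\delta_j=0$ holds if and only if $c_j+d_j=m+1$. Assembling the two families gives exactly the claimed equivalence: $R_2(Y)$ is Gorenstein if and only if $m=n$ and every inside corner, lower or upper, satisfies $i+j=m+1$. The ``in particular'' assertion is then the degenerate case $h=k=0$ of a full $m\times n$ matrix: there are no inside corners, the corner condition is vacuous, and the formula collapses to $[\omega_R]=(m-n)[\q_1]$, which vanishes precisely when $m=n$, recovering the classical Bruns--Vetter statement.

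The one place needing care -- and the step I would treat as the main obstacle -- is the bookkeeping around the index $i_j$ in $\delta_j$: the simplification $a_{i_j}+b_{i_j}=m+1$ is only legitimate once the $\lambda_i$ are known to vanish, so the two sets of conditions must be read jointly rather than independently, and one must check that $i_j\in\{1,\dots,h+1\}$ is always well defined (which it is, since $a_{h+1}=m>c_j$). The degenerate and one-sided configurations ($h=0$ or $k=0$) should be verified separately against the boundary conventions, but these checks are routine.
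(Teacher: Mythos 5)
Your argument is correct, but note that the paper offers no proof of this Fact at all: it is recorded purely as a citation to \cite[Proposition 2.5]{Co} and \cite[Corollary 8.9]{bruns:dr}. What you have written is, in effect, the argument behind the first citation: Conca establishes the Gorenstein criterion precisely by computing $[\omega_R]$ in the free abelian group $\Cl(R)$ and asking when it vanishes, which is exactly the content of Fact~\ref{omega}(1)--(2) that the paper recalls. Your bookkeeping checks out: vanishing of all $\lambda_i$ is equivalent to constancy of $a_i+b_i$ for $i=0,\dots,h+1$; the boundary conventions $(a_0,b_0)=(1,n)$ and $(a_{h+1},b_{h+1})=(m,1)$ then force $m=n$ with common value $m+1$ and place the lower inside corners on the antidiagonal; granting this, $\delta_j=(m+1)-c_j-d_j$, so the upper corners must lie there as well; and the index $i_j$ is always in $\{1,\dots,h+1\}$ because $a_0=1<c_j<m=a_{h+1}$. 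The degenerate case $h=k=0$ giving $[\omega_R]=(m-n)[\q_1]$ recovers the Bruns--Vetter statement. The one step you use without comment is the equivalence ``$R$ is Gorenstein if and only if $[\omega_R]=0$ in $\Cl(R)$'': for a general Cohen--Macaulay normal domain, Gorenstein only forces $[\omega_R]$ to be locally trivial, so one should reduce to the localization at the irrelevant maximal ideal in the spirit of Proposition~\ref{basicprops}\eqref{basicprops4} and \eqref{basicprops5}, where a graded rank-one reflexive with trivial class is genuinely free. With that remark supplied, your derivation is a complete, self-contained replacement for the citation rather than a different route around it.
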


\bigskip

%~~~~~~~~~~~~~~~~~~~~~~~~~~~~~~~~~~~~~~~~~~~~~~~~~~~~~~~~~~~~~~~~~~~~~~~
\subsection{Background-Bass Classes}
%~~~~~~~~~~~~~~~~~~~~~~~~~~~~~~~~~~~~~~~~~~~~~~~~~~~~~~~~~~~~~~~~~~~~~~~

The definition of Bass class originates with H.-B. Foxby \cite{avramov:rhafgd}.  (See also \cite{christensen:scatac}.). We present here only those few results we need.

\begin{defn} \label{Bassdef} Let $A$ be a  commutative Noetherian ring with identity, and let $M, N$ be $A$-modules such that $M \in \mathfrak S_0(A)$.  Then $N$ is in the {\bf Bass class} with respect to $M$, written $N \in \mathcal B_M(A)$, if 
\begin{enumerate}[(i)]
\item $\ext^{\geq 1}_A(M,N) = 0=\tor_{\geq 1}^A(M, (\hhom_A(M,N))$; and 
\item $M \otimes_A \hhom_A(M,N) \stackrel{\zeta_N^M}{\to} N$ is an isomorphism, where $\zeta_N^M(m \otimes \varphi) = \varphi(m)$.
\end{enumerate}
\end{defn}

\begin{ex}\label{ex181011a}
Let $A$ be a  commutative Noetherian ring with identity, and let $M,M'\in\s_0(R)$.
Then $M\in\mathcal B_M(A)$ by~\cite[Corollary~3.2.2(a)]{semidualizing moduleBook}.
Also, if $M'\in\mathcal B_M(A)$, then~\cite[Proposition 4.1.1(b)]{semidualizing moduleBook} implies that $\Hom{M}{M'}\in\s_0(A)$.
\end{ex}

\begin{lem}[\protect{\cite[Theorem~4.3]{altmann:sdmtp}}] \label{bassclasslemma} Let $R$ and $S$ be algebras finitely generated over a field $\sfk$. Set $T = R \otimes_{\sfk} S$, and let $M, M' \in \mathfrak S_0(R)$ and $N, N' \in \mathfrak S_0(S)$.  Then $M \otimes_{\sfk} N \in \mathcal B_{M' \otimes_{\sfk} N'}(T)$ if and only if $M \in \mathcal B_{M'}(R)$ and $N \in \mathcal B_{N'}(S)$.  Likewise,
$M' \otimes_{\sfk} N' \in \mathcal B_{M \otimes_{\sfk} N}(T)$ if and only if $M' \in \mathcal B_{M}(R)$ and $N' \in \mathcal B_{N}(S)$. 
\end{lem}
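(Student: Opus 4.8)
The plan is to reduce the entire statement to K\"unneth formulas over the field $\sfk$. Since $\sfk$ is a field, every $\sfk$-module is flat, so if $P_\bullet \to M'$ and $Q_\bullet \to N'$ are resolutions by finitely generated projectives over $R$ and $S$ respectively, then $P_\bullet \otimes_\sfk Q_\bullet \to M' \otimes_\sfk N'$ is such a resolution over $T$. Computing with these resolutions and invoking the K\"unneth theorem over $\sfk$ (where all higher Tor over $\sfk$ vanish) gives natural isomorphisms
\begin{align*}
\Ext[T]{i}{M' \otimes_\sfk N'}{M \otimes_\sfk N} &\cong \bigoplus_{p+q=i} \Ext[R]{p}{M'}{M} \otimes_\sfk \Ext[S]{q}{N'}{N}, \\
\Tor[T]{i}{M' \otimes_\sfk N'}{U \otimes_\sfk V} &\cong \bigoplus_{p+q=i} \Tor[R]{p}{M'}{U} \otimes_\sfk \Tor[S]{q}{N'}{V}
\end{align*}
for all $i \geq 0$ and all $R$-modules $U$ and $S$-modules $V$; the case $i = 0$ of the first line is the Hom--K\"unneth isomorphism $\Hom[T]{M' \otimes_\sfk N'}{M \otimes_\sfk N} \cong \Hom{M'}{M} \otimes_\sfk \Hom[S]{N'}{N}$, valid because $M'$ and $N'$ are finitely generated. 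Specializing the second line to $U = \Hom{M'}{M}$ and $V = \Hom[S]{N'}{N}$ expresses each ingredient of Definition~\ref{Bassdef} for $M \otimes_\sfk N \in \mathcal B_{M' \otimes_\sfk N'}(T)$ as a $\sfk$-tensor product of the corresponding ingredients over $R$ and over $S$. (Taking $M = M'$ and $N = N'$ in these formulas also shows $M' \otimes_\sfk N' \in \mathfrak S_0(T)$, so the Bass class is defined.)

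For the forward implication, assume $M \in \mathcal B_{M'}(R)$ and $N \in \mathcal B_{N'}(S)$. In each summand of the two displayed decompositions with $i \geq 1$ one has $p \geq 1$ or $q \geq 1$, so the relevant $\Ext$ or $\Tor$ factor vanishes by Definition~\ref{Bassdef}, and both vanishing conditions of the Bass class over $T$ follow. For the evaluation map I would verify that, under the Hom-- and Tor--K\"unneth isomorphisms, the canonical map $\zeta$ over $T$ is identified with the $\sfk$-tensor product of the two evaluation maps over $R$ and $S$; each factor is an isomorphism by hypothesis, hence so is their tensor product. This identification of $\zeta$ with a tensor product of evaluation maps --- a naturality check on $\zeta$ --- is the main technical point; everything else is formal bookkeeping.

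For the converse, assume $M \otimes_\sfk N \in \mathcal B_{M' \otimes_\sfk N'}(T)$, and use that $M$ and $N$ are nonzero (being semidualizing). Writing the evaluation isomorphism over $T$ as the tensor product of the maps $f \colon M' \otimes_R \Hom{M'}{M} \to M$ and $g \colon N' \otimes_S \Hom[S]{N'}{N} \to N$, the source $(M' \otimes_R \Hom{M'}{M}) \otimes_\sfk (N' \otimes_S \Hom[S]{N'}{N}) \cong M \otimes_\sfk N$ is nonzero, so both tensor factors are nonzero; and over a field, if $f \otimes_\sfk g$ is an isomorphism of nonzero spaces then $f$ and $g$ are each isomorphisms (injectivity because $u \otimes v \neq 0$ for $u, v \neq 0$, surjectivity because $\im(f \otimes_\sfk g) = \im f \otimes_\sfk \im g$). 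In particular $\Hom{M'}{M} \neq 0 \neq \Hom[S]{N'}{N}$. The $\Ext$ vanishing over $T$ then forces, for each $i \geq 1$, the $q = 0$ summand $\Ext[R]{i}{M'}{M} \otimes_\sfk \Hom[S]{N'}{N}$ to vanish, whence $\Ext[R]{i}{M'}{M} = 0$; symmetrically $\Ext[S]{i}{N'}{N} = 0$. The same bookkeeping on the $\Tor$ decomposition gives the two Tor-vanishing conditions over $R$ and $S$. Thus $M \in \mathcal B_{M'}(R)$ and $N \in \mathcal B_{N'}(S)$. The ``Likewise'' assertion follows by interchanging the roles of the primed and unprimed modules throughout.
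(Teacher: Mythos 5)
Your argument is correct, but it is worth noting that the paper does not prove this lemma at all: it is quoted verbatim from Altmann's Theorem~4.3, which is established there in the more general setting of semidualizing DG modules over tensor products of DG algebras, using the machinery of derived categories. Your proposal replaces that citation with a self-contained, elementary module-theoretic proof: tensor finite free resolutions over $\sfk$, apply the K\"unneth formula (with no Tor corrections since $\sfk$ is a field) to get the decompositions
$\operatorname{Ext}^i_T(M'\otimes_\sfk N', M\otimes_\sfk N)\cong\bigoplus_{p+q=i}\operatorname{Ext}^p_R(M',M)\otimes_\sfk\operatorname{Ext}^q_S(N',N)$
and its Tor analogue, and read off both directions of the equivalence. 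The two points that carry the real content are exactly the ones you identify: (a) the compatibility of the evaluation map $\zeta$ over $T$ with the K\"unneth isomorphisms, which reduces to the computation $\zeta\bigl((m'\otimes n')\otimes(\varphi\otimes\psi)\bigr)=\varphi(m')\otimes\psi(n')$ on elementary tensors; and (b) in the converse direction, the nonvanishing of $\operatorname{Hom}_S(N',N)$, $N'\otimes_S\operatorname{Hom}_S(N',N)$, and their $R$-counterparts, which you correctly extract from the evaluation isomorphism over $T$ before killing the $q=0$ (resp.\ $p=0$) summands. Your linear-algebra observation that $f\otimes_\sfk g$ being an isomorphism of nonzero spaces forces $f$ and $g$ to be isomorphisms is also sound. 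What the citation to Altmann buys the paper is generality (DG modules) and brevity; what your argument buys is transparency and independence from that machinery. No gap.
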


\begin{prop} \label{basicprops} 
Let $R$ be a standard graded ring, $R_0 = \sfk$ a field, $R = R_0[R_1]$, and $\frak m = \langle R_1 \rangle$. Let $M, M'$ be finitely-generated graded $R$-modules.  Then:
\begin{enumerate}[\rm(1)]
\item\label{basicprops1} $M \in \frak S_0(R)$ if and only if $M_{\m} \in \frak S_0(R_{\m})$;
\item\label{basicprops4} If $M \in \frak S_0(R)$, then $M \cong R$ if and only if $M_{\m} \cong R_{\m}$;
\item\label{basicprops5} $M$ is dualizing for $R$ if and only if $M_{\m}$ is dualizing for $R_{\m}$;
\item\label{basicprops6} For $M \in \frak S_0(R)$, we have $M' \in \mathcal B_M(A)$ if and only if $M'_{\m} \in \mathcal B_{M_{\m}}(A_{\m})$; 
\item\label{basicprops7} For $M, M' \in \frak S_0(R)$, we have $M \cong M'$ if and only if $M_{\m} \cong M'_{\m}$; and
\item\label{basicprops3} If $R$ is a normal domain and $\underline f \in R$ is a homogeneous $R$-regular sequence, then the map 
$\beta\colon\s_0(R)\to\s_0(R/\underline fR)$ given by $C\mapsto C/\underline fC$ is well-defined and injective.
\end{enumerate}
\end{prop}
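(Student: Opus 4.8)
The plan is to prove every part by transferring the defining conditions---all of which involve finitely generated graded modules and graded structure maps---down to the local ring $R_{\m}$, exploiting that over a standard graded $\sfk$-algebra these conditions are \emph{faithfully detected} at the irrelevant maximal ideal $\m$. The workhorse is the elementary observation that a finitely generated graded $R$-module $N$ vanishes if and only if $N_{\m}=0$: if $N\neq 0$ it has an associated prime, which is graded and hence contained in $\m$, so $\m\in\Supp N$ and $N_{\m}\neq 0$. I will combine this with the standard facts that $\Hom{-}{-}$, $\ext$, and $\tor$ commute with localization for finitely generated modules over the Noetherian ring $R$, that $R/\m=\sfk=\kappa(\m)$, and that the homothety map $R\to\Hom MM$ and the evaluation map $\zeta$ localize to the corresponding maps over $R_{\m}$.

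For (1) the forward implication is immediate from localizing $\Hom MM\cong R$ and $\ext^{\geq 1}_R(M,M)=0$. For the converse, the kernel and cokernel of the homothety map and each $\ext^i_R(M,M)$ are finitely generated graded modules whose localizations at $\m$ vanish (because $M_{\m}$ is semidualizing); the detection fact then forces them to vanish over $R$, giving $M\in\mathfrak S_0(R)$. Part (4) is proven identically: each condition of Definition~\ref{Bassdef} asserts the vanishing of a finitely generated graded module---namely $\ext^{\geq 1}_R(M,M')$, $\tor^R_{\geq 1}(M,\Hom M{M'})$, and the kernel and cokernel of $\zeta$---all of which localize faithfully at $\m$.

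For (2), note $\dim_{\sfk}(M/\m M)=\dim_{\kappa(\m)}(M_{\m}/\m M_{\m})=1$ since $M_{\m}\cong R_{\m}$ is free of rank one; by graded Nakayama $M$ is cyclic, so there is a graded surjection $R(-d)\onto M$ with finitely generated graded kernel $K$. Localizing, $R_{\m}\onto M_{\m}\cong R_{\m}$ is a surjective endomorphism of a Noetherian module, hence an isomorphism, so $K_{\m}=0$ and therefore $K=0$; thus $M\cong R(-d)\cong R$. Part (5) then follows formally: from $M_{\m}\cong M'_{\m}$ and $M_{\m}\in\mathcal B_{M_{\m}}(R_{\m})$ (Example~\ref{ex181011a}) we get $M'_{\m}\in\mathcal B_{M_{\m}}(R_{\m})$, so $M'\in\mathcal B_M(R)$ by (4); hence $C:=\Hom M{M'}\in\mathfrak S_0(R)$ with $M\otimes_R C\cong M'$, while $C_{\m}\cong\Hom[R_{\m}]{M_{\m}}{M'_{\m}}\cong\Hom[R_{\m}]{M_{\m}}{M_{\m}}\cong R_{\m}$ forces $C\cong R$ by (2), whence $M'\cong M\otimes_R R\cong M$. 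For (3), by (1) only the injective dimensions must be compared; since the finite-length graded $\sfk$-spaces $\ext^i_R(\sfk,M)$ localize faithfully at $\m$ and detect injective dimension (graded Bass), we get $\id_R M<\infty$ if and only if $\id_{R_{\m}}M_{\m}<\infty$.

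Finally, (6) is the most delicate, and I expect its injectivity to be the main obstacle, since it resists the purely formal detection arguments used above. Well-definedness is a base-change statement: a homogeneous $R$-regular sequence $\underline f\subseteq\m$ is automatically $C$-regular (semidualizing modules satisfy $\depth_R C=\depth R$), and then $C/\underline f C$ is semidualizing over $R/\underline f R$ by the standard descent of semidualizing modules along a regular sequence (see, e.g., \cite{christensen:scatac}); this plainly depends only on the isomorphism class of $C$. For injectivity, suppose $C/\underline f C\cong C'/\underline f C'$, say via a graded isomorphism $\bar\phi$. The plan is to lift $\bar\phi$ to a graded map $\phi\colon C\to C'$: the obstruction lives in the groups $\ext^{\geq 1}_R(C,C')$, which vanish once $C'\in\mathcal B_C(R)$, and this membership I obtain by ascending $\,\overline{C'}\cong\overline{C}\in\mathcal B_{\overline C}(\overline R)$ through $\underline f$ via the base-change behavior of Bass classes along a reduction. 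With the obstruction gone, $\bar\phi$ lifts to $\phi$, which is surjective by graded Nakayama (its cokernel $Q$ satisfies $(\underline f)Q=Q\subseteq\m Q$) and injective because $R$ is a domain and a surjection of rank-one torsion-free modules has torsion, hence zero, kernel; thus $\phi$ is an isomorphism and $C\cong C'$. Alternatively, one may first reduce to the local ring $R_{\m}$ using part (5) and then invoke the known injectivity of the reduction map on semidualizing modules over a local ring.
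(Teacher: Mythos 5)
Your parts (1)--(5) track the paper's own proof almost step for step: the same faithful detection of vanishing of finitely generated graded modules at the irrelevant ideal handles the semidualizing and Bass-class criteria, the minimal-generators/graded-Nakayama argument handles $M\cong R$, the $\operatorname{Ext}^i_R(\sfk,-)$ comparison handles the dualizing item, and your derivation of $M\cong M'$ from $M_{\mathfrak m}\cong M'_{\mathfrak m}$ via $\operatorname{Hom}_R(M,M')\in\mathfrak S_0(R)$, then $\operatorname{Hom}_R(M,M')\cong R$, then $M'\cong M\otimes_R\operatorname{Hom}_R(M,M')$ is exactly the paper's. The one genuine divergence is the injectivity in part (6). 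The paper localizes at $\mathfrak m$, quotes the known local statement that reduction modulo a regular sequence is injective on $\mathfrak S_0$ of a local ring, and finishes with part (5); your primary argument instead lifts the given isomorphism $\overline C\cong\overline{C'}$ directly through $\underline f$, killing the obstruction with $\operatorname{Ext}^{\geq 1}_R(C,C')=0$ extracted from $C'\in\mathcal B_C(R)$, and then gets bijectivity of the lift from graded Nakayama plus rank-one torsion-freeness over the normal domain. That route is more self-contained (it in effect reproves the cited local lemma in the graded setting and produces an explicit lifted isomorphism), but its load-bearing step --- ascending $\overline{C'}\in\mathcal B_{\overline C}(\overline R)$ back to $C'\in\mathcal B_C(R)$ along $R\to R/\underline fR$ --- is asserted rather than proved; it is a standard citable fact (and requires knowing $\underline f$ is $C$- and $C'$-regular, which holds because associated primes of semidualizing modules agree with those of $R$), so either give it a precise reference or fall back on your stated alternative, which is exactly the paper's argument. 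Two small points worth adding: the paper first observes that every class in $\mathfrak S_0(R)\subseteq\operatorname{Cl}(R)$ has a graded representative, which is what makes the graded reduction map well defined on isomorphism classes at all; and in your part (2) the conclusion is $M\cong R(-d)$, which is $R$ only as an ungraded module --- harmless here, since the proposition asserts an ungraded isomorphism.
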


\begin{proof}
Much of this is a variation on standard localization results. 
For instance, \cite[Proposition 2.2.3]{semidualizing moduleBook} says that a finitely generated $R$-module $C$ is
semidualizing for $R$ if and only if  for all maximal ideals $\m$ the localization $C_\m$ is semidualizing for $R_\m$.
One modifies the proof of this result, using the fact that $- \otimes_R R_{\m}$ is faithfully exact on the category of graded $R$-modules,
to establish part~\eqref{basicprops1}. 
Part~\eqref{basicprops6} is verified similarly, from the proof of~\cite[Proposition~3.5.4]{semidualizing moduleBook}.

The non-trivial implication in part~\eqref{basicprops4} follows from the fact that $M$ and $M_\m$ have the same minimal numbers of generators (over
$R$ and $R_\m$, respectively) followed by an application of~\cite[Corollary 2.1.14]{semidualizing moduleBook}.
For the non-standard implication in part~\eqref{basicprops5}, use the isomorphism $\Ext[R_\m]i{R_\m/\m R_\m}{M_\m}\cong\Ext i{R/\m}M$
to compare injective dimensions over $R$ and $R_\m$, with part~\eqref{basicprops1}.

The non-trivial implication in part~\eqref{basicprops7} merits a little more explanation. Assume that $M, M' \in \frak S_0(R)$ 
satisfy $M_{\m} \cong M'_{\m}$. Example~\ref{ex181011a} implies that $M'_\m\cong M_\m\in\mathcal B_{M_\m}(R_{\m})$
and furthermore that $\Hom{M}{M'}_\m\cong\Hom[R_\m]{M_{\m}}{M'_{\m}}\in\s_0(R_\m)$.
Since $\Hom{M}{M'}$ is finitely generated and graded, part~\eqref{basicprops1} implies that $\Hom{M}{M'}\in\s_0(R)$. 
Returning to the isomorphism $M_{\m} \cong M'_{\m}$, we conclude that 
$$\Hom{M}{M'}_\m\cong\Hom[R_\m]{M_{\m}}{M'_{\m}}\cong\Hom[R_\m]{M_{\m}}{M_{\m}}\cong R_\m$$
so $\Hom{M}{M'}\cong R$ by part~\eqref{basicprops4}. Since part~\eqref{basicprops6} implies that $M'\in\mathcal B_M(R)$, it follows by definition of 
$\mathcal B_M(R)$
that 
$$M'\cong M\otimes_R\Hom{M}{M'}\cong M\otimes_RR\cong M$$
as desired. 

For part~\eqref{basicprops3}, assume that $R$ is a normal domain and $\underline f \in R$ is a homogeneous $R$-regular sequence.
Fact~\ref{fact:sdm}\eqref{item181011a} implies that
$\s_0(R)\subseteq\Cl(R)$. Thus, since $R$ is standard graded over $\sfk$, every class of $\Cl(R)$ is represented by a graded module, so every 
semidualizing $R$-module has the structure of a graded $R$-module.
Hence, the  map $\s_0(R)\to\s_0(R/\underline fR)$ given by $C\mapsto C/\underline fC$ is well-defined by~\cite[Corollary~3.4.3]{semidualizing moduleBook}. 
To see that this map is injective\footnote{The map is not a homomorphism, as $\mathfrak S_0(-)$ has no useful group structure, so we can not just check a 
kernel condition here; see \cite[Remark~2.3.5]{semidualizing moduleBook}.}, 
suppose that $M,M' \in \mathfrak S_0(R)$ are such that $M/\underline fM \cong M'/\underline fM'$.  Then 
$M_{\m}/\underline fM_{\frak m} \cong (M/\underline fM)_{\overline{\frak m}} 
\cong (M'/\underline fM')_{\overline{\frak m}} \cong M'_{\frak m}/\underline fM'_{\frak m}$.  
By \cite[Proposition 4.2.18]{semidualizing moduleBook}, we have $M_{\frak m} \cong M'_{\frak m}$, hence by part~\eqref{basicprops7}, $M \cong M'$.  
\end{proof}

\begin{prop} \label{prop181011b} Let $R$ and $S$ be standard graded rings with $R_0 = \sfk=S_0$ a field, $R = R_0[R_1]$, and $S=S_0[S_1]$.
Set $T = R \otimes_{\sfk} S$, which is standard graded with $T_+$ maximal.
Then there an injective map $\alpha: \mathfrak S_0(R) \times \mathfrak S_0(S) \to \mathfrak S_0(T)$ defined by $\alpha([M], [N]) = [M \otimes_{\sfk} N]$.
\end{prop}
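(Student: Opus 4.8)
The plan is to show that $\alpha$ is well-defined (lands in $\mathfrak S_0(T)$) and then injective, exploiting the semidualizing-module machinery over tensor products. The key structural tool is Lemma~\ref{bassclasslemma}, together with the characterization of isomorphism of semidualizing modules via Bass classes encoded in Example~\ref{ex181011a} and Proposition~\ref{basicprops}\eqref{basicprops7}.

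First I would verify that $M \otimes_{\sfk} N \in \mathfrak S_0(T)$ whenever $M \in \mathfrak S_0(R)$ and $N \in \mathfrak S_0(S)$. This is the standard fact that a tensor product (over the base field $\sfk$) of semidualizing modules is semidualizing for the tensor-product ring; it follows from the Künneth-type isomorphisms
$$\Hom[T]{M \otimes_{\sfk} N}{M \otimes_{\sfk} N} \cong \Hom{M}{M} \otimes_{\sfk} \Hom[S]{N}{N}$$
and the analogous vanishing of $\ext^{\geq 1}_T$, both of which reduce to the given conditions $\Hom{M}{M} \cong R$, $\Hom[S]{N}{N} \cong S$, and $\ext^{\geq 1}_R(M,M) = 0 = \ext^{\geq 1}_S(N,N)$, since $\sfk$ is a field (so there are no higher Tor obstructions in the flat base change). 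This shows $\alpha$ is well-defined into $\mathfrak S_0(T)$.

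Next, for injectivity, suppose $\alpha([M],[N]) = \alpha([M'],[N'])$, i.e.\ $M \otimes_{\sfk} N \cong M' \otimes_{\sfk} N'$ as $T$-modules. Since $M' \otimes_{\sfk} N'$ is visibly isomorphic to $M' \otimes_{\sfk} N' \in \mathfrak S_0(T)$, Example~\ref{ex181011a} gives $M' \otimes_{\sfk} N' \in \mathcal B_{M' \otimes_{\sfk} N'}(T)$, and transporting along the isomorphism yields $M \otimes_{\sfk} N \in \mathcal B_{M' \otimes_{\sfk} N'}(T)$. By Lemma~\ref{bassclasslemma}, this forces $M \in \mathcal B_{M'}(R)$ and $N \in \mathcal B_{N'}(S)$. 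Running the symmetric half of Lemma~\ref{bassclasslemma} on the isomorphism read in the other direction likewise gives $M' \in \mathcal B_{M}(R)$ and $N' \in \mathcal B_{N}(S)$. The plan is then to upgrade these Bass-class memberships to genuine isomorphisms $M \cong M'$ and $N \cong N'$, using that membership in both $\mathcal B_{M'}(R)$ and $\mathcal B_{M}(R)$ (for two semidualizing modules) is the Bass-class criterion for isomorphism; concretely, Example~\ref{ex181011a} shows $\Hom{M'}{M} \in \mathfrak S_0(R)$, and a count on the $T$-module isomorphism will pin down $\Hom[T]{M' \otimes_{\sfk} N'}{M \otimes_{\sfk} N} \cong T$, which factors as $\Hom{M'}{M} \otimes_{\sfk} \Hom[S]{N'}{N}$. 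Since a semidualizing module that is free of rank one forces each tensor factor to be free of rank one over its respective ring (the only way $S_0 \otimes$ a graded module is $\sfk$ in degree zero), we conclude $\Hom{M'}{M} \cong R$ and $\Hom[S]{N'}{N} \cong S$, whence $M \cong M'$ and $N \cong N'$ by the Bass-class reconstruction $M \cong M' \otimes_R \Hom{M'}{M}$, exactly as in the proof of Proposition~\ref{basicprops}\eqref{basicprops7}.

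The main obstacle I anticipate is the bookkeeping in the injectivity step, specifically extracting $\Hom{M'}{M} \cong R$ and $\Hom[S]{N'}{N} \cong S$ individually from the single identity $\Hom{M'}{M} \otimes_{\sfk} \Hom[S]{N'}{N} \cong T$. One must rule out that the isomorphism could hold with neither factor being free, which requires an argument that a nontrivial graded factorization of $T \cong R \otimes_{\sfk} S$ is impossible; here the graded structure and the standard-graded hypothesis (each ring generated in degree one, with degree-zero part $\sfk$) are essential, and I would either invoke Proposition~\ref{basicprops}\eqref{basicprops4} after localizing at the respective irrelevant maximal ideals, or argue directly that the degree-one pieces of the two semidualizing factors must each have rank one. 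The remaining steps—the Künneth isomorphisms and the Bass-class reconstruction—are routine given the cited results.
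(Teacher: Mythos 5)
Your proof is correct. It matches the paper's argument in its two load-bearing steps: well-definedness is the standard tensor-product fact (the paper simply cites \cite[Proposition~2.3.6]{semidualizing moduleBook} where you sketch the K\"unneth computation), and injectivity begins, exactly as in the paper, by feeding the isomorphism $M\otimes_{\sfk}N\cong M'\otimes_{\sfk}N'$ into Lemma~\ref{bassclasslemma} to obtain the four memberships $M\in\mathcal B_{M'}(R)$, $M'\in\mathcal B_{M}(R)$, $N\in\mathcal B_{N'}(S)$, $N'\in\mathcal B_{N}(S)$. You diverge only in the endgame. The paper localizes at the irrelevant maximal ideals, transports the mutual Bass-class memberships via Proposition~\ref{basicprops}\eqref{basicprops6}, uses the (unstated but standard) local fact that two semidualizing modules each lying in the other's Bass class must be isomorphic, and then deglobalizes with Proposition~\ref{basicprops}\eqref{basicprops7}. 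You instead stay graded: from $\operatorname{Hom}_T(M'\otimes_{\sfk}N',\,M\otimes_{\sfk}N)\cong T$ and its factorization as $\operatorname{Hom}_R(M',M)\otimes_{\sfk}\operatorname{Hom}_S(N',N)$, a minimal-generator count (which multiplies across the $\sfk$-tensor) forces each factor to be cyclic, hence free of rank one by \cite[Corollary~2.1.14]{semidualizing moduleBook}, and the Bass-class evaluation isomorphism $M\cong M'\otimes_R\operatorname{Hom}_R(M',M)$ finishes. This is in effect the proof of Proposition~\ref{basicprops}\eqref{basicprops7} inlined into the tensor-product setting; it buys independence from the local antisymmetry fact at the cost of some extra K\"unneth bookkeeping for $\operatorname{Hom}$, and the obstacle you flag (ruling out a nontrivial factorization of $T$) is handled by exactly the generator count you propose. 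Either route is sound.
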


\begin{proof} 
The map $\alpha$ is well-defined by \cite[Proposition 2.3.6]{semidualizing moduleBook}.
For the injectivity of $\alpha$, let $M, M' \in \mathfrak S_0(R)$ and $N, N' \in \mathfrak S_0(S)$ such that $M \otimes_{\sfk} N \cong M' \otimes_{\sfk} N'$.  We need to show that $M \cong M'$ and $N \cong N'$.  By assumption, $M \otimes_{\sfk} N \in \mathcal B_{M' \otimes_{\sfk} N'}(T)$ and vice versa.  Thus, by Lemma \ref{bassclasslemma}, $M \in \mathcal B_{M'}(R), N \in \mathcal B_{N'}(S)$ and likewise, $M' \in \mathcal B_{M}(R), N' \in \mathcal B_{N}(S)$.  Consequently, $M_{\m} \in \mathcal B_{M'_{\m}}(R_{\m})$ and $M'_{\m} \in \mathcal B_{M_{\m}}(R_{\m})$, hence $M_{\m} \cong M'_{\m}$, and thus, $M \cong M'$, as per Proposition~\ref{basicprops}\eqref{basicprops7}.  Likewise, $N \cong N'$.  
(See also~\cite[Theorems~4.3 and~4.6]{altmann:sdmtp}.)
\end{proof}

\begin{cor} \label{lowerbound} For $t \times t$ ladder determinantal rings $R_1, R_2$ with ladders $Y_1, Y_2$, respectively, let $Z$ be the ladder constructed by identifying the lower left variable $y_1$ of $Y_1$ with the upper right variable $y_2$ of $Y_2$. Then we have
$$|\frak S_0(R_1)| \cdot |\frak S_0(R_2)| \leq |\frak S_0 (R_t(Z))| = \left|\frak S_0 \left(\frac{R_1 \otimes_{\sfk} R_2}{(y_1-y_2)} \right) \right|.$$
\end{cor}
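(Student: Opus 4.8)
The plan is to read the displayed line as two separate assertions. The equality is an isomorphism of graded $\sfk$-algebras $R_t(Z) \cong T/(y_1-y_2)$, where $T = R_1 \otimes_{\sfk} R_2$; the inequality is then obtained by factoring the passage from the pair $(R_1,R_2)$ to $R_t(Z)$ through $T$ and concatenating the two injectivity statements already available, namely Proposition~\ref{prop181011b} (tensor products increase the count) and Proposition~\ref{basicprops}\eqref{basicprops3} (modding out a regular element increases the count).

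For the equality, I would first note that $T = \sfk[Y_1]\otimes_{\sfk}\sfk[Y_2]/(I_t(Y_1)+I_t(Y_2))$, where $\sfk[Y_1]\otimes_{\sfk}\sfk[Y_2]$ is the polynomial ring on the two ladders realized on disjoint variable sets. Killing $y_1-y_2$ identifies the variable $y_1$ with $y_2$, so that $\bigl(\sfk[Y_1]\otimes_{\sfk}\sfk[Y_2]\bigr)/(y_1-y_2)\cong\sfk[Z]$, the polynomial ring on the glued ladder $Z$. The crucial combinatorial input is that, because $Y_1$ meets $Y_2$ only in the single variable $y$ and lies in the opposite quadrant relative to $y$ (with $y$ the lower-left corner of $Y_1$ and the upper-right corner of $Y_2$), every $t \times t$ minor lying entirely in $Z$ lies entirely in $Y_1$ or entirely in $Y_2$. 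Indeed, if a minor used a row strictly on the $Y_1$-side of $y$ together with a row strictly on the $Y_2$-side, then the entries in its leftmost and rightmost selected columns would force every selected column to pass through $y$, contradicting that the $t$ columns are distinct; the symmetric argument rules out straddling columns, and the two remaining mixed quadrants meet $Z$ only in an $L$-shaped set through $y$ that cannot support a $t\times t$ submatrix. Hence $I_t(Z) = I_t(Y_1)+I_t(Y_2)$ inside $\sfk[Z]$, and the isomorphism above carries $T/(y_1-y_2)$ onto $R_t(Z)$, giving the asserted equality of cardinalities.

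For the inequality, I would first apply Proposition~\ref{prop181011b} to the standard graded $\sfk$-algebras $R_1$ and $R_2$ to obtain an injection $\s_0(R_1)\times\s_0(R_2)\into\s_0(T)$, whence $|\s_0(R_1)|\cdot|\s_0(R_2)|\leq|\s_0(T)|$. I would then check the hypotheses of Proposition~\ref{basicprops}\eqref{basicprops3} for $T$. The algebra $T$ is standard graded with $T_+$ maximal, and it is a normal domain: each $R_i=R_t(Y_i)$ is normal over every field, since Conca's normality result \cite[Proposition~3.3]{Co} is field-independent, so the $R_i$ are geometrically normal (and geometrically integral) and their tensor product over $\sfk$ is again a normal domain. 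The element $f=y_1-y_2$ is homogeneous of degree $1$ and nonzero in the domain $T$ (the classes $y_1\otimes 1$ and $1\otimes y_2$ are distinct), hence $T$-regular. Proposition~\ref{basicprops}\eqref{basicprops3} then supplies an injection $\s_0(T)\into\s_0(T/fT)$, so $|\s_0(T)|\leq|\s_0(T/fT)|=|\s_0(R_t(Z))|$, and concatenating the two inequalities yields the claim.

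The main obstacle is the second paragraph: the equality rests on the combinatorial fact that no $t\times t$ minor straddles the glue point $y$, which upgrades the formal identity of defining ideals into the required ring isomorphism, together with the verification that $T=R_1\otimes_{\sfk}R_2$ is a normal domain. Once these geometric inputs are secured, the remainder is a purely formal concatenation of Proposition~\ref{prop181011b} and Proposition~\ref{basicprops}\eqref{basicprops3}.
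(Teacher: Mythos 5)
Your proposal is correct and follows essentially the same route as the paper: identify $R_t(Z)$ with $(R_1\otimes_{\sfk}R_2)/(y_1-y_2)$ and compose the injections from Proposition~\ref{prop181011b} and Proposition~\ref{basicprops}\eqref{basicprops3}. The only real difference is in justifying normality of $T=R_1\otimes_{\sfk}R_2$: you invoke geometric normality/integrality of the factors, whereas the paper observes more economically that $T$ is itself the ladder determinantal ring of the disconnected ladder $Y_1\cup Y_2$, so Conca's result applies directly (and your combinatorial verification that no $t\times t$ minor straddles the glue point, which the paper leaves implicit, is a welcome addition).
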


\begin{proof}
The rings $R_1, R_2$ satisfy the assumptions of Proposition~\ref{prop181011b}.
Also, $R_1 \otimes_{\sfk} R_2$ is a normal domain since it is a ladder determinantal ring over the disconnected ladder $Y = Y_1 \cup Y_2$.  Thus, the nonzero 
homogeneous element $f = y_1 - y_2$ is regular and satisfies $R_t(Z)\cong (R_1 \otimes_{\sfk} R_2)/(y_1-y_2)$.  
The result now immediately follows from the composition of 
the injective maps $\alpha: \frak S_0(R_1) \times \frak S_0(R_2) \to \frak S_0(R_1 \otimes_{\sfk} R_2)$ and $\beta: \frak S_0(R_1\otimes_{\sfk} R_2) \to \frak S_0((R_1\otimes_{\sfk} R_2)/f)$ from Propositions~\ref{prop181011b} and~\ref{basicprops}\eqref{basicprops3}. 
\end{proof}
\bigskip

%%%%%%%%%%%%%%%%%%%%%%%%%%%%%%%%%%%%%%%%%%%%%%%%%%

\section{Proof of Main Theorem} \label{sec3}

%%%%%%%%%%%%%%%%%%%%%%%%%%%%%%%%%%%%%%%%%%%%%%%%%%

We will prove our main result, Theorem~\ref{laddertype1s}, in this section through a series of inductions.  Because many of the arguments proceed in a similar manner, in certain cases only highlights are provided.  We begin with some additional notation and an example that will be carried throughout the section.
%We recall that $2 \times 2$ minors of a ladder $Y$ are special in the sense that $R_2(Y)$ is an algebra with straightening laws, or ASL, on the poset $Y$, as per \cite[p.~121]{Co}.  (However, $R_t(Y)$, for $t > 2$, is not.)

%~~~~~~~~~~~~~~~~~~~~~~~~~~~~~~~~~~~~~~~~~~~~~~~~~~~~~~~~~~~~~~~~~~~~~~~
\subsection{Preliminaries-notation}
%~~~~~~~~~~~~~~~~~~~~~~~~~~~~~~~~~~~~~~~~~~~~~~~~~~~~~~~~~~~~~~~~~~~~~~~

\begin{comment}
\begin{defn} An inside corner of {\bf type 1} occurs in a two-sided $t$-connected ladder when, for some $i = 1, \dots, h$ and some $j = 1, \dots, k$, it holds that $c_j- a_i \leq t-2$ and $d_j - b_i \leq t-2$.  
\end{defn}

In the $2 \times 2$ case, these corners occur exactly when an upper inside corner and a lower inside corner coincide; in the terminology of \cite{Co}, \cite{SWSeSpP1}, for some $i, j$, $(a_i, b_i) = (c_j, d_j)$.  The situation where $a_i - c_j \leq t-2$ or  $b_i - d_j \leq t-2$ will result in a $t$-disconnected ladder; i.e., a ladder that is not connected with respect to the $t \times t$ minors.
\end{comment}

\begin{notn}\label{notn181011a} Let $Z_0, Z_1$ be ladders in matrices of minimal size $m_0 \times n_0$, $m_1 \times n_1$, respectively. We define $Z_0 \# Z_1$ to be the ladder with a coincidental inside corner formed by identifying the variable $X_{m_0 1}$ of $Z_0$ with the variable $X_{1 n_1}$ of $Z_1$.  We repeat this process to get $Z_0 \# Z_1 \# \cdots \# Z_w$.
\end{notn}

\begin{ex} \label{type1} The ladder $L_3$ in the Introduction is $Z_0 \# Z_1$ of two $3 \times 2$ matrices $Z_0$, $Z_1$; i.e., ladders with no inside corners.  If the elements of $Z_0$ and $Z_1$ are indexed as below, then we identify $X_{32}$ with $X'_{32}$.

\begin{table}[h]
  \begin{center}
    \begin{tabular}{c c c c c c c c c c c c c c c c c c c c c}
       \!\!\!\!\!$X_{12}$ & \!\!\!\!\!$X_{13}$ & & & & & \!\!\!\!\!$X'_{31}$ & \!\!\!\!\!$X'_{32}$ \\
       \!\!\!\!\!$X_{22}$ & \!\!\!\!\!$X_{23}$ & & & & & \!\!\!\!\!$X'_{41}$ & \!\!\!\!\!$X'_{42}$ & & & \\
       \!\!\!\!\!$X_{32}$ & \!\!\!\!\!$X_{33}$ & & & & & \!\!\!\!\!$X'_{51}$ & \!\!\!\!\!$X'_{52}$ \\
         \end{tabular}
  \end{center}
\end{table}

\hskip1.45in $Z_0$ \hskip.9in  $Z_1$ 

\medskip
Then $\displaystyle{R_2(L_3) = R_2(Z_0 \# Z_1) =  \frac{R_2(Z_0) \otimes R_2(Z_1)}{(x_{32} - x'_{32})}}$. In terms of Corollary~\ref{lowerbound}, we have $|\frak S_0(R_2(Z_0))| \cdot |\frak S_0(R_2(Z_1))| \leq |\frak S_0(R_2(Z_0 \# Z_1 )|.$ 
\end{ex}
\medskip

\begin{notn} \label{notn:ideals}
  Let $Z_0, Z_1, \dots,Z_w$ be ladders with no coincidental inside corners, and let $Z=Z_0\# Z_1 \# \cdots \# Z_w$.
  We will use double indices to label the corners of $Z$ and the generators of
  $\Cl(R_2(Z))$. The ladder $Z_u$ will have $h_u$ lower inside corners and
  $k_u$ upper inside corners. The inside corners of $Z$ which are inside corners of $Z_u$ will be written as
  $(a_{ui},b_{ui}), (c_{uj},d_{uj})$, for $1 \leq i \leq h_u$ and $1 \leq j \leq k_u$. Additionally, there are inside corners of $Z$, which are not inside corners of any $Z_u$, but which are  
   variables coincidental to some $Z_u$ and $Z_{u+1}$.  In particular, for all $0\leq u<w$ we have
  $(a_{u,h_u+1},b_{u,h_u+1})=(c_{u,k_u+1},d_{u,k_u+1})=
  (a_{u+1,0},b_{u+1,0})=(c_{u+1,0},d_{u+1,0})$.  Similarly, we label the ideals
  of $Z$ as $\q_{ui},\p_{uj}$. Moreover, we write the ideals that contain the
  variable at the $u$-th coincidental inside corner, where $1\leq u \leq w$, as $\q_{u1}$ and $\p_{u0}$.
  That is,
  \begin{align*}
    \p_{uj} &=(x_{pq}\in R_2(Z) \mid p\leq c_{uj} \text{ and } q\leq d_{uj})
    && \text{for all } 1 \leq j \leq k_u \text{ and } 0 \leq u \leq w,\\
    \p_{u0} &=(x_{pq}\in R_2(Z) \mid p\leq c_{u0} \text{ and } q\leq d_{u0})
    && \text{for all } 1 \leq u \leq w, \text{ and}\\
    \q_{ui} &=(x_{a_{\{u,i-1\}},q}\in R_2(Y) \mid q \in \bbn)
    && \text{for all } 1 \leq i \leq h_u+1 \text{ and } 0 \leq u \leq w.
  \end{align*}
  We will also use $\q_{ui},\p_{uj}$ to denote the \emph{restrictions} of these
  ideals in $R_2(Z_u)$ for $j \neq 0$. On the other hand, we will identify
  $[\omega_{R_2(Z_u)}] \in \Cl(R_2(Z_u))$ with its \emph{image} in
  $\Cl(R_2(Z))$. That is, we write
  \begin{align*}
    [\omega_{R_2(Z_u)}] &= \sum_{i=1}^{h_u+1} \gl_{ui} [\q_{ui}]
    + \sum_{j=1}^{k_u} \gd_{uj} [\p_{uj}]
    \text{ in } \Cl(R_2(Z_u)), \text{ but}\\
    [\omega_{R_2(Z_u)}] &= \gl_{u1} ([\p_{u0}] + [\q_{u1}])
    + \sum_{i=2}^{h_u+1} \gl_{ui} [\q_{ui}] + \sum_{j=1}^{k_u} \gd_{uj} [\p_{uj}]
    \text{ in } \Cl(R_2(Z)).
  \end{align*}
  Note that in $\Cl(R_2(Z))$, the class $[\p_{u0}]+[\q_{u1}] = [\p_{u0} \cap \q_{u1}]$
  is the image of $[\q_{u1}] \in \Cl(R_2(Z_u))$ by \cite[Lemma 2.2]{SWSeSpP1}.
  We then have $[\omega_{R_2(Z)}] = [\omega_{R_2(Z_0)}]
  + \dots + [\omega_{R_2(Z_w)}]$ in $\Cl(R_2(Z))$.
\end{notn}

\noindent  {\bf Example~\ref{type1} (continued).}  Recall $L_3 = Z_0\# Z_1$ with
  corners $(a_{00},b_{00})=(c_{00},d_{00})$ $=(1,3)$,
  $(a_{01},b_{01})=(a_{10},b_{10})=(c_{01},d_{01})=(c_{10},d_{10})
  =(3,2)$ and $(a_{11},b_{11})=(c_{11},d_{11})=(5,1)$. The ring
  $R_2(Z)$ has ideals
  $\q_{01}=(x_{12},x_{13})$, $\q_{11}=(x_{31},x_{32},x_{33})$
  and $\p_{10}=(x_{12},x_{22},x_{31},x_{32})$.
  The ring $R_2(Z_0)$ has ideal $\q_{01}=(x_{12},x_{13})$
  and the ring $R_2(Z_1)$ has ideal $\q_{11}=(x'_{31},x'_{32})$.

  We identify $[\omega_{R_2(Z_0)}]=[(x_{12},x_{13})]=[\q_{01}] \in \Cl(R_2(Z_0))$
  with $[\q_{01}] \in \Cl(R_2(L_3))$. We identify $[\omega_{R_2(Z_1)}]
  =[(x'_{31},x'_{32})] = [\q_{11}] \in \Cl(R_2(L_3))$ with $[\q_{11}]+[\p_{10}] = [\q_{11} \cap \p_{10}]
  = [(x_{31},x_{32},x_{33}) \cap (x_{12},x_{22},x_{31},x_{32})]
  = [(x_{31},x_{32})] \in \Cl(R_2(L_3))$. With such identification, we have $[\omega_{R_2(L_3)}]
  = [\q_{01}]+[\q_{11}]+[\p_{10}] = [\omega_{R_2(Z_1)}]+[\omega_{R_2(Z_2)}]$.

\begin{notn} \label{notn:ladder}
  When we are considering a ladder $Y$ and would like to discuss a new related
  ladder, we will use the notation $Y^{\bullet}, Y^{\dag}, \ti{Y}$, etc., to denote the new ladders.
 The notation $R^{\bullet},R^{\dag}$, etc.,\ will always denote the associated ladder determinantal ring $R_2(Y^{\bullet}),R_2(Y^{\dag})$,  respectively.
\end{notn}

\begin{defn} (\cite[Definition 3.4]{SWSeSpP1})
  The \textbf{antitranspose} of a ladder $Y$ is the ladder $\ti{Y}$
  obtained by \textbf{antitransposing} the ladder $Y$, i.e.\ reflecting $Y$
  along the antidiagonal,
  so that $\ti{Y}_{ij}= X_{a_{h+1}-j+a_0,b_0-i+b_{h+1}}$.
  The ladder $\ti{Y}$ has corners $(\ti{a}_0,\ti{b}_0) = (b_{h+1},a_{h+1})$,
  $(\ti{a}_1,\ti{b}_1) = (b_0-d_1+b_{h+1},a_{h+1}-c_1+a_0)$, \ldots,
  $(\ti{a}_k,\ti{b}_k) = (b_0-d_k+b_{h+1},a_{h+1}-c_k+a_0)$,
  $(\ti{a}_{k+1},\ti{b}_{k+1}) = (b_0,a_0)$,
  $(\ti{c}_1,\ti{d}_1) = (b_0-b_1+b_{h+1},a_{h+1}-a_1+a_0)$, \ldots,
  $(\ti{c}_h,\ti{d}_h) = (b_0-b_h+b_{h+1},a_{h+1}-a_h+a_0)$.
\end{defn}

%~~~~~~~~~~~~~~~~~~~~~~~~~~~~~~~~~~~~~~~~~~~~~~~~~~~~~~~~~~~~~~~~~~~~~~~
\subsection{Base cases}
%~~~~~~~~~~~~~~~~~~~~~~~~~~~~~~~~~~~~~~~~~~~~~~~~~~~~~~~~~~~~~~~~~~~~~~~

We begin establishing the main result by proving some base cases.  In each of these statements (3.6-3.11), the number of semidualizing modules of $R = R_2(Y)$ is either 1, 2, or 4; we are setting $Y = Z_0 \# Z_1$, hence $\mathfrak S_0(R) = 2^{\ve_0 + \ve_1}$, where $\ve_i=0$ if $R_2(Z_i)$ is Gorenstein and $\ve_i=1$ otherwise.

\begin{prop} \label{onematrixtype1}
  Let $Y = Z_0 \# Z_1$ be a 2-connected ladder with exactly one coincidental inside corner, where $Z_0, Z_1$ are matrices of indeterminates, as shown below. Let $R = R_2(Y)$. Then $\mathfrak S_0(R)=\{[R],[\omega_{R_2(Z_0)}],[\omega_{R_2(Z_1)}],[\omega_R]\}$, where $[R]$ is the 0 class and $[\omega_R]=[\omega_{R_2(Z_0)}]+[\omega_{R_2(Z_1)}]$. In particular,
$|\mathfrak S_0(R)| = |\mathfrak S_0(R_2(Z_0))| \cdot |\mathfrak S_0(R_2(Z_1))|$.  

\begin{center}
    \begin{tabular}{c c c c c c c c}
        &  & & $X_{1,b_{01}}$ & $X_{1,b_{01}+1}$ & $\cdots$ &  & $X_{1,n}$\\
         &  & &  $\vdots$           &      $\vdots$             &        $Z_0$        &                         & $\vdots$\\
          &  & & $X_{a_{01}-1,b_{01}}$                & $X_{a_{01}-1,b_{01}+1}$                     &      $\cdots$          &                        & $X_{a_{01}-1,n}$\\
        $X_{a_{10},1}$ & $\cdots$ & $X_{a_{10}, b_{10}-1}$ & $X_{a_{10},b_{10}}$ & $X_{a_{01},b_{01}+1}$ & $\cdots$ &  & $X_{a_{01},n}$\\
       $X_{a_{10}+1,1}$ & $\cdots$  & $X_{a_{10}+1, b_{10}-1}$ & $X_{a_{10}+1,b_{10}}$\\
       $\vdots$   &     $Z_1$    &  $\vdots$  & $\vdots$  & & \multicolumn{3}{l}{$(a_{10},b_{10})=(a_{01},b_{01})$}\\
       $X_{m,1}$ & $\cdots$ & $X_{m, b_{10}-1}$ & $X_{m,b_{10}}$ & & \multicolumn{3}{l}{$=(c_{10},d_{10})=(c_{01},d_{01})$}
    \end{tabular}
  \end{center}
\end{prop}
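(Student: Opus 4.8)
The plan is to pin $\mathfrak S_0(R)$ between the two triviality cases by bounding $|\mathfrak S_0(R)|$ above and below by $2^{\ve_0+\ve_1}$, identifying the four displayed classes along the way. First I would record the reduction behind Example~\ref{type1}: because the upper-right block $Z_0$ and the lower-left block $Z_1$ meet only at the coincidental corner and the deleted region $X\ssm Y$ separates them, every $2$-minor of $Y$ lies entirely in $Z_0$ or entirely in $Z_1$, so $R\cong (R_2(Z_0)\otimes_{\sfk}R_2(Z_1))/(f)$, where $f$ identifies the two copies of the coincidental corner and is a regular homogeneous element. This is exactly the setting of Corollary~\ref{lowerbound}. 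Since $Z_0,Z_1$ are matrices of indeterminates, Fact~\ref{Gorprop} together with the fact that a matrix of indeterminates has only trivial semidualizing modules \cite{SWSeSpP1} gives $\mathfrak S_0(R_2(Z_i))=\{[R_2(Z_i)],[\omega_{R_2(Z_i)}]\}$, a set of size $2^{\ve_i}$, the two classes coinciding exactly when $Z_i$ is square.

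For the lower bound I would feed this into the injective composite $\beta\circ\alpha$ of Propositions~\ref{prop181011b} and~\ref{basicprops}\eqref{basicprops3} supplied by Corollary~\ref{lowerbound}, obtaining $|\mathfrak S_0(R)|\geq |\mathfrak S_0(R_2(Z_0))|\cdot|\mathfrak S_0(R_2(Z_1))|=2^{\ve_0+\ve_1}$. Tracing a pair $([C_0],[C_1])$ through $\beta\circ\alpha$, the class of its image in $\Cl(R)$ is the sum of the images of $[C_0]$ and $[C_1]$ under the identifications of Notation~\ref{notn:ideals}; thus the four pairs drawn from $\{[R_2(Z_i)],[\omega_{R_2(Z_i)}]\}$ map to $[R]$, $[\omega_{R_2(Z_0)}]$, $[\omega_{R_2(Z_1)}]$, and $[\omega_{R_2(Z_0)}]+[\omega_{R_2(Z_1)}]=[\omega_R]$ (the last equality by Fact~\ref{omega} and Notation~\ref{notn:ideals}). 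This exhibits the four classes as semidualizing and shows they are distinct precisely when $\ve_0=\ve_1=1$, collapsing as claimed otherwise.

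The real content is the reverse inequality: every semidualizing class is one of these four. Since $R$ is a normal domain, Fact~\ref{fact:sdm}\eqref{item181011a} gives $\mathfrak S_0(R)\subseteq\Cl(R)$, and Fact~\ref{omega} identifies $\Cl(R)\cong\bbz^3$ with basis $[\q_{01}],[\q_{11}],[\p_{10}]$ and computes $[\omega_R]$ explicitly. I would then constrain an arbitrary semidualizing class $[C]=x[\q_{01}]+y[\q_{11}]+z[\p_{10}]$ by restricting to the two arms: inverting the indeterminates in the free arm of $Z_1$ turns $R$ into a localized polynomial extension of $R_2(Z_0)$, and symmetrically for $Z_1$, while a semidualizing module restricts to a semidualizing (hence trivial) module under such a localization by Proposition~\ref{basicprops}\eqref{basicprops1}. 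The induced maps on class groups therefore send $[C]$ into $\{0,[\omega_{R_2(Z_i)}]\}$, pinning down the $Z_0$- and $Z_1$-components of $[C]$; the twins relation $[C]+[\Hom{C}{\omega_R}]=[\omega_R]$ from Fact~\ref{fact:sdm}\eqref{twins}, together with the explicit $[\omega_R]$, then fixes the remaining mixed coordinate and forces $[C]\in\{[R],[\omega_{R_2(Z_0)}],[\omega_{R_2(Z_1)}],[\omega_R]\}$.

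The main obstacle is this upper bound, and inside it the delicate point is that localization tends to annihilate class-group data: inverting the corner variable alone makes $R$ regular and kills every class, so the restriction maps must be chosen to detect the three coordinates separately, especially the mixed coordinate attached to $[\p_{10}]$. If that bookkeeping becomes unwieldy, the alternative I would pursue is to prove $\alpha$ and $\beta$ are both bijective here: bijectivity of $\alpha$ is the tensor-product theorem for semidualizing modules \cite{altmann:sdmtp}, while surjectivity of $\beta$ amounts to lifting every semidualizing $R$-module along the hypersurface surjection $R_2(Z_0)\otimes_{\sfk}R_2(Z_1)\twoheadrightarrow R$, the genuinely hard step, where the graded determinantal structure and the normality of the tensor product would have to be exploited.
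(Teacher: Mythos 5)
Your lower bound and the identification of the four classes $[R]$, $[\omega_{R_2(Z_0)}]$, $[\omega_{R_2(Z_1)}]$, $[\omega_R]=[\omega_{R_2(Z_0)}]+[\omega_{R_2(Z_1)}]$ via Corollary~\ref{lowerbound} match the paper exactly. The gap is in the upper bound. First, one localization per arm is not enough: inverting a corner variable of $Z_1$ (resp.\ $Z_0$) kills both $[\p_{10}]$ and the $[\q]$-class of the other block, so two such maps pin the $[\q_{01}]$- and $[\q_{11}]$-coefficients of a candidate $[C]=x[\q_{01}]+y[\q_{11}]+z[\p_{10}]$ but leave $z$ completely unconstrained. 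The paper needs \emph{four} localizations (two per arm, at different corner variables, namely $x_{1,b_{01}}$, $x_{a_{01},n}$, $x_{a_{10},1}$, $x_{m,b_{10}}$) precisely because the maps $\rho^{\dag}$ and $\rho^{\dag\dag}$ send $[\p_{10}]$ to $\pm[\q]$ of the surviving block and hence detect $z$.

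Second, and more seriously, the twins relation cannot finish the job. Fact~\ref{fact:sdm}\eqref{twins} only says $\s_0(R)$ is closed under $[C]\mapsto[\omega_R]-[C]$; it excludes nothing. After all four localizations the paper is still left with the mixed candidates $[C_8]=\lambda_{11}[\p_{10}]$, $[C_{10}]=\lambda_{11}[\q_{11}]$ and their $\omega$-complements $[C_{11}]$, $[C_9]$, and these pairs satisfy $[C_8]+[C_{11}]=[\omega_R]=[C_9]+[C_{10}]$, so they are perfectly consistent with the twins relation. Ruling them out is the real content of the paper's proof: the fourth localization forces $\lambda_{11}=\pm\lambda_{01}$, and then one shows by exhibiting explicit monomials that the multiplication map $\mu\colon C\otimes C'\to CC'$ on the corresponding symbolic-power ideals fails to be injective, contradicting Fact~\ref{fact:sdm}\eqref{fact:multisiso}; antitransposition handles the sign cases. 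None of this appears in your proposal. Your fallback (proving $\beta$ surjective, i.e.\ lifting semidualizing modules along the hypersurface section) is exactly the hard open step you identify, and it is not established here or in the cited references, so it does not rescue the argument.
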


\begin{proof}  Let $Y$ be the ladder shown above, where $2\leq a_{10} \leq m-1, 2\leq b_{10} \leq n-1$. This ladder is 2-connected and we have $(a_{00},b_{00}) = (c_{00},d_{00})= (1,n)$, 
$(a_{01},b_{01}) = (a_{10},b_{10}) = (c_{01},d_{01}) = (c_{10},d_{10})$, and $(a_{11},b_{11}) =  (c_{11},d_{11}) =(m,1)$.  With $\frakq_{01}, \frakq_{11},$ and $\frakp_{10}$ the ideals shown below, the class group of $R$ is $\Cl(R) \cong \bbz[\q_{01}] \oplus \bbz[\q_{11}] \oplus \bbz [\p_{10}]$, where
\begin{align*}
  \frakq_{01} &= (x_{1,b_{01}}, x_{1,b_{01}+1}, \dots, x_{1,n}), \\
  \frakq_{11} &= (x_{a_{10},1},\dots, x_{a_{10},b_{10}},\dots, x_{a_{10},n}), \text{ and}\\
  \frakp_{10} &= (x_{1,b_{01}}, x_{2,b_{01}}, \dots, x_{a_{01},b_{01}}, x_{a_{10}, 1}, x_{a_{10},2}, \dots, x_{a_{10},b_{10}-1}).
\end{align*}
The canonical class of $R$ is $[\omega_R] = \lambda_{01}[\frakq_{01}] + \lambda_{11}[\frakq_{11}]  + \lambda_{11}[\frakp_{10}]$, where
$\lambda_{01} = a_{01}+b_{01}-1-n$, $\lambda_{11} = m+1-a_{10}-b_{10}$ and $\delta_{10} = \lambda_{11}$.

The proof will proceed by inverting variables in $Y$.
We will let $C_1,C_2,\dots$ denote possible semidualizing modules of $R$.

\begin{step}
  First, let $Y^{\bullet}$ be the ladder obtained by deleting rows $a_{00}, a_{00}+1,\dots,a_{01}-1$ and columns $b_{01}+1, b_{01}+2,\dots, b_{00}$ of $Y$; that is, $Y^{\bullet} = Z_1$.
  Invert $x_{1,b_{01}}$ in $R$ and
  let $\rho^{\bullet}$ be the composition of the following natural surjections:
  \[
    \Cl(R) \to \Cl(R_{x_{1,b_{01}}}) \xrightarrow{\cong} \Cl(R^{\bullet}).
  \]

  In particular, $\Cl(R^{\bullet}) \cong \bbz[\q_{11}]$, where (the new) $\q_{11}$ is the ideal generated by the (images in $R^{\bullet}$ of the) variables in the first row of $Y^{\bullet}$, by \cite[Corollary 8.4]{bruns:dr}, and $[\omega_{R^{\bullet}}] = \lambda_{11}[\q_{11}]$ by \cite[(7.10),(8.8)]{bruns:dr}.
  
    Under the natural map $\rho^{\bullet} \colon \Cl(R) \to \Cl(R^{\bullet})$, we have
    $\rho^{\bullet}([\q_{01}])=0, \rho^{\bullet}([\p_{10}]) = 0$ and $\rho^{\bullet} ([\q_{11}]) = [\q_{11}]$,
    so $\ker(\rho^{\bullet}) = \bbz[\q_{01}] \oplus \bbz [\p_{10}]$. The determinantal ring $R^{\bullet}$ has semidualizing modules $R^{\bullet}$ and $\omega_{R^{\bullet}}$ only.  Since the localization of a semidualizing module is also a semidualizing
  module, the only possible semidualizing modules of $R$ are in
  $\vf^{-1}([R^{\bullet}]) = \bbz[\q_{01}] \oplus \bbz [\p_{10}]$ or $\vf^{-1}([\omega_{R^{\bullet}}]) = \bbz[\q_{01}] \oplus \bbz [\p_{10}]
  + \lambda_{11}[\q_{11}]$. Thus, the possible semidualizing modules
  of $R$ are $[C_1]=r[\q_{01}]+s[\p_{10}]$ and $[C_2]=u[\q_{01}] + v[\p_{10}] + \lambda_{11}[\q_{11}](=u[\q_{01}] + v[\p_{10}]+[\omega_{R_2(L)}]-\gl_{11}[\p_{10}])$, where $r,s,u,v \in \bbz$.
\end{step}
  
\begin{step}
  Next, obtain a ladder $Y^{\dag}$ by deleting rows $1,\dots,a_{01}-1$ and columns $b_{01}+1,\dots, n$ of $Y$; in fact, $Y^{\dag} = Z_1$.  Invert $x_{a_{01},n}$ in $R$ and
  let $\rho^{\dag}$ be the composition of the following natural surjections:
  \[
    \Cl(R) \to \Cl(R_{x_{a_{01},n}}) \xrightarrow{\cong} \Cl(R^{\dag}).
  \]
    Under the natural map $\rho^{\dag} \colon \Cl(R) \to \Cl(R^{\dag})$, we have
    $[\q_{01}], [\q_{11}] \mapsto 0$ and $[\p_{10}] \mapsto [\q_{11}]$ (the new $\q_{11}$).
  Again $\s_0 (R^{\dag})=\{[R^{\dag}],[\omega_{R^{\dag}}]\}$, where $[\omega_{R^{\dag}}] = \lambda_{11}[\q_{11}]$. 

  To determine the semidualizing modules of $R$, consider the possible
  images of $[C_1],[C_2]$ under $\rho^{\dag}$:
   $$\rho^{\dag}(r[\q_{01}]+s[\p_{10}]) = 0 \Rightarrow s = 0 \quad {\text{and}} \quad \rho^{\dag}(r[\q_{01}]+s[\p_{10}]) = \lambda_{11}[\q_{11}]  \Rightarrow s = \lambda_{11},$$
   and similarly $v=0$ or $\lambda_{11}$.
   Hence, the possible semidualizing modules of $R$ are $[C_3]=r[\q_{01}]$, $[C_4]=r[\q_{01}]+\lambda_{11}[\p_{10}]$,
  $[C_5]=u[\q_{01}] + \lambda_{11}[\q_{11}]$ and $[C_6]=u[\q_{01}] + \lambda_{11}[\p_{10}] + \lambda_{11}[\q_{11}]$.
\end{step}
    
\begin{step}
  Thirdly, obtain $Y^{\bullet\bullet}$ by deleting rows
    $a_{10}+1,\dots,m$ and columns $1,\dots,b_{10}-1$ of $Y$;  that is, $Y^{\bullet\bullet} = Z_0$.
  Invert $x_{a_{10},1}$ in $R$.
  Under the natural map $\rho^{\bullet\bullet} \colon \Cl(R) \to \Cl(R^{\bullet\bullet})$, we have $[\q_{11}], [\p_{10}] \mapsto 0$ and $[\q_{01}] \mapsto [\q_{01}]$.
  Since $R^{\bullet\bullet}$ is a determinantal ring, we know
    that $\s_0 (R^{\bullet\bullet})=\{[R^{\bullet\bullet}],[\omega_{R^{\bullet\bullet}}]\}$, where $[\omega_{R^{\bullet\bullet}}] = \lambda_{01}[\q_{01}]$.

  If $\rho^{\bullet\bullet}([C_3])=r[\q_{01}]=0$, then $r=0$, and $[C_3]=0$ is a trivial semidualizing module of $R$. If $\rho^{\bullet\bullet}([C_3])=\lambda_{01}[\q_{01}]$, then
  $r=\lambda_{01}$. Doing the same for $C_4,C_5,C_6$, we get the following possible nontrivial semidualizing modules of $R$.
  \begin{align*}
    [C_7] &= \lambda_{01}[\q_{01}]=[\omega_{R_2(Z_0)}]\\ 
    [C_8] &= \lambda_{11}[\p_{10}]\\
    [C_9] &= \lambda_{01}[\q_{01}] + \lambda_{11}[\p_{10}]=[\omega_R]-[C_{10}]\\
    [C_{10}] &= \lambda_{11}[\q_{11}]\\
    [C_{11}] &= \lambda_{01}[\q_{01}] + \lambda_{11}[\q_{11}]=[\omega_R]-[C_8]\\
    [C_{12}] &= \lambda_{11}[\p_{10}] + \lambda_{11}[\q_{11}]=[\omega_{R_2(Z_1)}]
   \end{align*}

    Hence, it remains to show that $C_8,C_9,C_{10},C_{11}$ can not be nontrivial semidualizing modules of $R$.
\end{step}
    
\begin{step}
  Fourthly, obtain the ladder $Y^{\dag\dag}$ by deleting rows
    $a_{10}+1,\dots,m$ and columns $1,\dots,b_{10}-1$ of $Y$; in fact, $Y^{\dag\dag} = Z_0$.  Invert $x_{m,b_{10}}$ in $R$.
      Under the natural map $\rho^{\dag\dag} \colon \Cl(R) \to \Cl(R^{\dag\dag})$, we have
    $[\q_{01}] \mapsto [\q_{01}]$, $[\q_{11}] \mapsto [(x_{a_{01}, b_{01}}, \dots, x_{a_{01},n})] = [\q_{01}]$, and $[\p_{10}] \mapsto [(x_{1, b_{01}}, x_{2, b_{01}}, \dots, x_{a_{01},b_{01}})]=-[\q_{01}]$.
    Again, $\s_0 (R^{\dag\dag}) = \{[R^{\dag\dag}],[\omega_{R^{\dag\dag}}]\}$, where $[\omega_{R^{\dag\dag}}] = \lambda_{01}[\q_{01}]$.

  We can now show that the modules $C_8,C_9,C_{10},C_{11}$ can not be
  nontrivial semidualizing modules of $R$. If $\rho^{\dag\dag}([C_8])
  =-\lambda_{11}[\q_{01}]=0$ (equivalently, $\rho^{\dag\dag}([C_{11}])
  =[\omega_{R^{\dag\dag}}]$), then $\lambda_{11}=0$, so $[C_8]=0$
  is a trivial semidualizing module.
  If $\rho^{\dag\dag}([C_8])=[\omega_{R^{\dag\dag}}]=
  \lambda_{01}[\q_{01}]$ (equivalently, $\rho^{\dag\dag}([C_{11}])=0$),
  then $\lambda_{11}=-\lambda_{01}$. Similarly, if $\rho^{\dag\dag}
  ([C_{10}])=\lambda_{11}[\q_{01}]=0$ or $[\omega_{R^{\dag\dag}}]$
  (equivalently, $\rho^{\dag\dag}([C_9])=[\omega_{R^{\dag\dag}}]$ or 0
  respectively), then $\lambda_{11}=0$ or $\lambda_{01}$ respectively.
  So we only need to show that $\lambda_{11}=\pm \lambda_{01} \neq 0$
  leads to a contradiction.

  \begin{case} \label{case:negative}
    $\lambda_{11}=-\lambda_{01}>0$. We have $[C_8]=\lambda_{11}[\p_{10}]$ and
    $[C_{11}]=-\lambda_{11}[\q_{01}] + \lambda_{11}[\q_{11}]$. By Fact \ref{omega}(3),
    we have $[C_{11}]=\lambda_{11}([\q'_{01}]+[\p_{10}]+[\q_{11}])$.
    By \cite[Lemma~2.1]{SWSeSpP1},
    \begin{align*}
      [C_8] &= [\p_{10}^{\lambda_{11}}] \text{ and}\\
      [C_{11}] &= [(\q'_{01})^{\lambda_{11}} \cap \p_{10}^{\lambda_{11}} \cap
      \q_{11}^{\lambda_{11}}].
    \end{align*}
    Let us identify $C_8$ with the ideal $\p_{10}^{\lambda_{11}}$, and likewise for $C_{11}$. 
    Then under the multiplication map $\mu \colon C_8 \otimes C_{11} \to C_8C_{11}$,
    we have
    \[
      \mu(x_{a_{10},b_{10}}^{\lambda_{11}} \otimes
             x_{1,b_{01}}x_{a_{10}, 1}x_{a_{10},b_{10}}^{\lambda_{11}-1})
      = \mu(x_{a_{10},1}x_{1,b_{01}}x_{a_{10},b_{10}}^{\lambda_{11}-1}
         \otimes x_{a_{10},b_{10}}^{\lambda_{11}}).
    \]
    Hence $\mu$ is not injective, contradicting Fact~\ref{fact:sdm} \eqref{fact:multisiso},
    so $C_8,C_{11}$ are not semidualizing modules.
    Since $\lambda_{11} \neq \lambda_{01}$ in this case, the modules $C_9,C_{10}$
    are not semidualizing either, so the only remaining possible classes of
    nontrivial semidualizing modules are $[C_7]=[\omega_{R_2(Z_0)}]$ and
   $[C_{12}]=[\omega_{R_2(Z_1)}]$.
  \end{case}

  \begin{case} \label{case:positive}
    $\lambda_{11}=\lambda_{01}>0$. By \cite[Lemma~2.1]{SWSeSpP1}, we have
    \begin{align*}
      [C_9] &= \gl_{11}[\q_{01}]+\gl_{11}[\p_{10}]
      =[\q_{01}^{\gl_{11}} \cap \p_{10}^{\gl_{11}}] \text{ and}\\
      [C_{10}] &= [\q_{11}^{\gl_{11}}].
    \end{align*}
    As in the previous case, we identify $C_9,C_{10}$ with the corresponding ideals on the right.
    Then under the multiplication map $\mu \colon C_9 \otimes C_{10} \to C_9C_{10}$,
    we have
    \begin{align*}
      \mu(x_{1,b_{01}}^{\gl_{11}-1} x_{1,n} x_{a_{10},1} \otimes x_{a_{01},b_{01}} x_{a_{01},n}^{\gl_{11}-1})
      &= x_{a_{10},1} x_{1,b_{01}}^{\gl_{11}-1} x_{1,n} x_{a_{01},b_{01}} x_{a_{01},n}^{\gl_{11}-1}\\
      &= x_{a_{10},1} x_{1,b_{01}}^{\gl_{11}} x_{a_{01},n}^{\gl_{11}}\\
      &= \mu(x_{a_{10},1}x_{1,b_{01}}^{\gl_{11}} \otimes x_{a_{01},n}^{\gl_{11}}).
    \end{align*}
    Hence $\mu$ is not injective, and we reach the same conclusion as in
    Case~\ref{case:negative}.
  \end{case}

  \begin{case}
    $\lambda_{01}=-\lambda_{11}>0$. In this case, we take the antitranspose
    $\ti{Y}$ of $Y$. The coincidental inside corner
    of $\ti{Y}$ is at $(n+1-b_{01},m+1-a_{01})=(n+1-b_{10},m+1-a_{10})$.
    For $\ti{Y}$, we have $\ti{\gl}_{01}
    = (n+1-b_{01})+(m+1-a_{01})-(m+1)=n+1-a_{01}-b_{01}=-\gl_{01}$ and
    $\ti{\gl}_{11}=(n+1)-(n+1-b_{10})-(m+1-a_{10})=a_{10}+b_{10}-m-1=-\gl_{11}$.
    Then $\ti{\gl}_{11}=-\ti{\gl}_{01}>0$, and we can use Case~\ref{case:negative}.
  \end{case}

  \begin{case}
    $\gl_{01}=\gl_{11}<0$. Then we antitranspose $Y$ and use Case~\ref{case:positive}.
  \end{case}
\end{step}

In summary, we have established that $\mathfrak S_0(R)\subseteq\{[R],[\omega_{R_2(Z_0)}],
[\omega_{R_2(Z_1)}],[\omega_R]\}$, where $[\omega_R]=[\omega_{R_2(Z_0)}]+[\omega_{R_2(Z_1)}]$, 
and hence $|\mathfrak S_0(R)| \leq |\mathfrak S_0(R_2(Z_0))| \cdot |\mathfrak S_0(R_2(Z_1))|$.
The reverse inequality is given by Corollary~\ref{lowerbound}, which completes the proof.
\end{proof}

\begin{lem} \label{onesidedtype1} Let $Y = Z_0 \# Z_1$ be a 2-connected ladder, where $Z_0$ is a matrix of indeterminates and $Z_1$ is a one-sided ladder. 
Let $R=R_2(Y)$. Then $\mathfrak S_0(R)=\{[R],[\omega_{R_2(Z_0)}],[\omega_{R_2(Z_1)}],[\omega_R]\}$.
In particular, $|\mathfrak S_0(R)| = |\mathfrak S_0(R_2(Z_0))| \cdot |\mathfrak S_0(R_2(Z_1))|$.
\end{lem}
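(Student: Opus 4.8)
The plan is to follow the same localization-by-inverting-variables strategy used in Proposition~\ref{onematrixtype1}, but to exploit the fact that $Z_1$ is now a one-sided ladder rather than a matrix. The key structural point is that a one-sided ladder determinantal ring has a well-understood class group and canonical class via Conca's formulas (Fact~\ref{omega}), and crucially $Z_1$ being one-sided means all of its inside corners lie on one side (say $h_1 = 0$, so $Z_1$ has only upper inside corners, or $k_1 = 0$). First I would set up the double-indexed notation from Notation~\ref{notn:ideals}: the coincidental corner contributes the identification $[\omega_{R_2(Z_1)}] = \gl_{11}([\p_{10}] + [\q_{11}]) + \sum_j \gd_{1j}[\p_{1j}]$ in $\Cl(R)$, and $\Cl(R)$ decomposes as the span of the $[\q_{01}]$ coming from $Z_0$ together with the generators coming from $Z_1$ and the coincidental-corner ideals $[\p_{10}], [\q_{11}]$.

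Next I would carry out the sequence of localizations. Inverting a variable in the corner region of $Z_0$ (as in Step~1 of Proposition~\ref{onematrixtype1}) produces a surjection $\rho^\bullet\colon\Cl(R)\to\Cl(R^\bullet)$ with $R^\bullet = R_2(Z_1)$; since $Z_1$ is one-sided, $\s_0(R_2(Z_1))$ is known, so this constrains the $Z_1$-components of any semidualizing class to lie in $\{[R_2(Z_1)], [\omega_{R_2(Z_1)}]\}$. Symmetrically, inverting a variable that collapses $Z_1$ yields $R^{\bullet\bullet} = R_2(Z_0)$, a determinantal ring with only trivial semidualizing modules, pinning down the $[\q_{01}]$-component to be $0$ or $\gl_{01}$. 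Combining these two restrictions leaves a short list of at most four candidate classes, namely those built from $\{0,\,[\omega_{R_2(Z_0)}]\}$ on the $Z_0$-side and $\{0,\,[\omega_{R_2(Z_1)}]\}$ on the $Z_1$-side; this already gives $\mathfrak S_0(R)\subseteq\{[R],[\omega_{R_2(Z_0)}],[\omega_{R_2(Z_1)}],[\omega_R]\}$ together with possibly a few ``mixed'' classes involving $[\p_{10}]$ and $[\q_{11}]$ that must be eliminated.

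The main obstacle, exactly as in the matrix case, is ruling out the surviving mixed candidates (the analogues of $C_8,\dots,C_{11}$), where the $Z_0$ and $Z_1$ canonical-class contributions are split incompatibly across the coincidental corner. I expect to eliminate these by a further localization $\rho^{\dag\dag}$ that sends both $[\q_{11}]$ and $[\p_{10}]$ into the $Z_0$-class group (forcing a relation $\gl_{11}=\pm\gl_{01}$), and then to exhibit an explicit non-injectivity of the multiplication map $\mu\colon C\otimes C'\to CC'$ using monomials built from the variables straddling the coincidental corner, contradicting Fact~\ref{fact:sdm}\eqref{fact:multisiso}. The antitranspose (Definition~\ref{defn181011a}'s companion) handles the sign-symmetric subcases. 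The one genuinely new wrinkle relative to Proposition~\ref{onematrixtype1} is bookkeeping: $Z_1$ now has its own inside corners contributing extra generators $[\p_{1j}]$ (or $[\q_{1i}]$) to $\Cl(R)$, so I must check that the localization $\rho^\bullet$ faithfully transports $\s_0(R_2(Z_1))$ and that these additional generators do not produce spurious semidualizing classes; this reduces, via Proposition~\ref{basicprops}, to the already-established knowledge of $\s_0$ for one-sided ladders. Once the mixed cases are dispatched, the inclusion $\mathfrak S_0(R)\subseteq\{[R],[\omega_{R_2(Z_0)}],[\omega_{R_2(Z_1)}],[\omega_R]\}$ combines with the reverse inequality from Corollary~\ref{lowerbound} to force equality, completing the proof.
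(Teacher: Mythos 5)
Your overall framework (localize to project onto $\Cl(R_2(Z_0))$ and $\Cl(R_2(Z_1))$, then kill the mixed candidates, then quote Corollary~\ref{lowerbound} for the reverse inclusion) is the right shape, but your plan diverges from the paper's proof at the decisive point, and the divergence creates a real gap. The paper does \emph{not} rerun the multiplication-map analysis of Proposition~\ref{onematrixtype1}: it inducts on $h_1$, the number of lower inside corners of $Z_1$ (after antitransposing so $k_1=0$). Its second localization inverts $x_{a_{1,h_1},1}$ and peels off only the \emph{farthest} corner of $Z_1$, so that $Y^{\bullet\bullet}=Z_0\#Z_1^{\bullet\bullet}$ still carries the coincidental corner; the inductive hypothesis then says $\s_0(R^{\bullet\bullet})$ consists of exactly the four expected classes, and since $\ker(\rho^{\bullet\bullet})=\bbz[\q_{1,h_1+1}]$ the coefficient of $[\p_{10}]$ survives into $R^{\bullet\bullet}$ and is pinned down by pure coefficient-matching. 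All the hard work with $\mu\colon C\otimes C'\to CC'$ is confined to the base case $h_1=0$, which is Proposition~\ref{onematrixtype1}.

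Your plan instead collapses $Z_1$ entirely to reach $R^{\bullet\bullet}=R_2(Z_0)$. Two problems. First, when $Z_1$ has inside corners there is no single variable whose inversion accomplishes this: $x_{a_{10},1}$ is not even in the ladder (row $a_{10}$ starts at column $b_{11}$, since $X_{a_{11}-1,\,b_{11}-1}\notin Y$), and inverting any one bottom-left variable only removes one corner's worth of the ladder, exactly as in Step~2 of Lemma~\ref{lem:ol}. Second, and more seriously, even granting a composite of localizations landing in $R_2(Z_0)$, its kernel contains $[\p_{10}]$ and all the $[\q_{1i}]$, so after your two projections the coefficient of $[\p_{10}]$ is completely unconstrained --- e.g.\ $\gl_{01}[\q_{01}]+s[\p_{10}]$ passes both tests for every $s\in\bbz$. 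This is not ``a few mixed classes'': it is an infinite family, and disposing of it would force you to reconstruct, for an arbitrary one-sided $Z_1$, the full four-localization-plus-$\mu$-non-injectivity apparatus of Proposition~\ref{onematrixtype1}, including new explicit monomials adapted to the shape of $Z_1$ and the sign analysis $\gl_{11}=\pm\gl_{01}$. You gesture at this but do not supply it, and it is precisely the content the induction on $h_1$ is designed to avoid. To repair your argument, replace the ``collapse all of $Z_1$'' step with the one-corner peel and invoke the inductive hypothesis on $Z_0\#Z_1^{\bullet\bullet}$.
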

\smallskip
\begin{center}  
  \begin{picture}(204,204)
    \put(0,0){\line(0,1){50}}
    \put(0,50){\line(1,0){16}}
    \put(16,50){\line(0,1){16}}
     \put(16,66){\line(1,0){16}}
    \put(32,66){\line(0,1){20}}
    \put(32,86){\line(1,0){16}}
    \put(48,86){\line(0,1){16}}
    \put(48,102){\line(1,0){16}}
    \put(64,102){\line(0,1){16}}
    \put(64,118){\line(1,0){150}}
    \put(0,0){\line(1,0){138}}
    \put(138,0){\line(0,1){200}}
    \put(138, 200){\line(1,0){76}}
    \put(214,118){\line(0,1){82}}
    \put(44,124){$(a_{10},b_{10})=(a_{01},b_{01})$}
    \put(200,108){$(a_{01},n)$}
    \put(125,205){$(1, b_{01})$}
    \put(-40,54){$(a_{1,h_1},b_{1,h_1})$}
    \put(168,157){$Z_0$}
    \put(75,62){$Z_1$}
     \multiput(16,50)(1,0){122}{\line(1,0){.25}}
 \end{picture}
\end{center}

\begin{proof} Since $Z_0$ is a matrix, $h_0=k_0=0$. Now if $h_1=0$ for the ladder $Z_1$, then the ladder $Y$ can be antitransposed to obtain a one-sided ladder $\ti{Z_1}$ with $k_1=0$. Hence we may assume that $k_1=0$ for the ladder $Z_1$, and that $Z_0 \# Z_1$ takes the shape above. Set $Y = Z_0 \# Z_1$ and $R = R_2(Y)$. The class group of $R$ is $\Cl(R) \cong \bbz[\q_{01}] \oplus \bbz[\q_{11}] \oplus \bbz[\q_{12}] \oplus \cdots \oplus \bbz[\q_{1,h_1+1}] \oplus \bbz [\p_{10}]$; i.e., it is free of rank $h_1+3$.
The canonical class of $R$ is
\[
  [\omega_R] = \gl_{01}[\q_{01}] + \sum_{i=1}^{h_1+1} \gl_{1i} [\frak q_{1i}] + \delta_{10} [\frakp_{10}],
\]
where $\gl_{01}=a_{01}+b_{01}-1-n$, $\gl_{1i}=a_{1,i}+b_{1,i}-a_{1,i-1}-b_{1,i-1}$ and $\delta_{10}=\gl_{11}$.

The Lemma is proved by induction on $h_1$.  The base case of $h_1 = 0$ is given by Proposition~\ref{onematrixtype1}.  Thus, let $h_1 > 0$ and assume that the Lemma holds for any 2-connected ladder $\ti{Z_0} \# \ti{Z_1}$, where $\ti{Z_0}$ is a matrix and $\ti{Z_1}$ is a one-sided ladder with $h_1-1$ lower inside corners.  As before, $C_1,C_2$ will denote possible semidualizing modules of $R$.

\setcounter{step}{0}
\begin{step}
  Obtain the ladder $Y^{\bullet}$ by deleting rows $1,\dots,a_{01}-1$ and columns $b_{01}+1,\dots, b_{00}$ of $Y$; that is, $Y^{\bullet} = Z_1$.  Invert $x_{1, b_{01}}$ in $R$.  Under the natural map $\rho^{\bullet} \colon \Cl(R) \to \Cl(R^{\bullet})$, where $R^{\bullet} = R_2(Z_1)$, we have $[\q_{01}], [\p_{10}] \mapsto 0$ and $[\q_{1i}] \mapsto [\q_{1i}]$ for all $1 \leq i \leq h_1+1$. We know $\Cl(R^{\bullet}) \cong \bbz^{h_1+1}$, generated by the ideals $\q_{1i}$ for $1 \leq i \leq h_1+1$, so $\ker(\rho^{\bullet}) = \bbz[\q_{01}] \oplus \bbz [\p_{10}]$. By the One-Sided Ladder Theorem \cite{SWSeSpP1}, $\s_0(R^{\bullet}) = \{0, [\omega_{R^{\bullet}}]\}$, where $[\omega_{R^{\bullet}}] = \sum_{i=1}^{h_1+1} \gl_{1i} [\frak q_{1i}]$. Hence the possible classes of semidualizing modules of $R$ are $[C_1] = r[\q_{01}]+s[\p_{10}]$ and $[C_2]=u[\q_{01}]+v[\p_{10}] + \sum_{i=1}^{h_1+1} \gl_{1i} [\frak q_{1i}](= u[\q_{01}]+v[\p_{10}]+[\omega_{R^{\bullet}}]-\gl_{11}[\p_{10}])$, where $r,s,u,v \in \bbz$.
\end{step}
 
\begin{step}
  Next, obtain $Y^{\bullet\bullet}$ by deleting rows $a_{1,h_1}+1,\dots, a_{1,h_1+1}$ and columns $1,\dots,b_{1,h_1}-1$ of $Y$. Then $Y^{\bullet\bullet}=Z_0 \#Z_1^{\bullet\bullet}$, where $Z_1^{\bullet\bullet}$ is a one-sided ladder with one fewer inside corner than $Z_1$.  Invert $x_{a_{1,h_1}, 1}$  in $R$.
Under the natural map $\rho^{\bullet\bullet} \colon \Cl(R) \to \Cl(R^{\bullet\bullet})$, we have $[\q_{1,h_1+1}] \mapsto 0$, $[\q_{01}]\mapsto [\q_{01}]$, $[\p_{10}]\mapsto [\p_{10}]$ and $[\q_{1i}] \mapsto [\q_{1i}]$ for all $1\leq i \leq h_1$.  Since $Z_1^{\bullet\bullet}$ has $h_1-1$ lower inside corners, the induction hypothesis gives $\s_0 (R^{\bullet\bullet})=\{0,[\omega_{R_2(Z_0)}],[\omega_{R_2(Z_1^{\bullet\bullet})}],[\omega_{R^{\bullet\bullet}}]\}$. We consider several cases.

\setcounter{case}{0}
\begin{case}        
If $\rho^{\bullet\bullet}([C_1])=0$, then $r=s=0$, and $[C_1]=0$.

If $\rho^{\bullet\bullet}([C_2])=0$, then $u = v = 0$ and $\lambda_{1i}=0$ for all $1 \leq i \leq h_1$. Then $[C_2]= \gl_{1,h_1+1}[\q_{1,h_1+1}]=\gl_{11}[\p_{10}] + \sum_{i=1}^{h_1+1} \gl_{1i} [\q_{1i}] = [\omega_{R_2(Z_1)}]$.
\end{case}

\begin{case}
If $\rho^{\bullet\bullet}([C_1])=[\omega_{R_2(Z_0)}]= \lambda_{01} [\q_{01}]$, then $r=\lambda_{01}$ and $s=0$, so $[C_1]=\lambda_{01} [\q_{01}]=[\omega_{R_2(Z_0)}]$.

If  $\rho^{\bullet\bullet}([C_2])=[\omega_{R_2(Z_0)}]$, then $u=\lambda_{01}$, $v=0$, and $\lambda_{1i}=0$ for all $1 \leq i \leq h_1$, so that $[C_2]=\gl_{01}[\q_{01}]+\gl_{11}[\p_{10}] + \sum_{i=1}^{h_1+1} \gl_{1i} [\q_{1i}] = [\omega_{R}]$.
\end{case}

\begin{case}
If $\rho^{\bullet\bullet}([C_1])=[\omega_{R_2(Z_1^{\bullet\bullet})}]=\gl_{11}[\p_{10}] + \sum_{i=1}^{h_1} \gl_{1i} [\q_{1i}]$, then $r=0$, $s = \lambda_{11}$, and $\lambda_{1i}=0$ for all $1 \leq i \leq h_1$. So $[C_1]=0$.

If $\rho^{\bullet\bullet}([C_2])=[\omega_{R_2(Z_1^{\bullet\bullet})}]$, then $u=0$ and $v = \lambda_{11}$, so that $[C_2]=\gl_{11}[\p_{10}] + \sum_{i=1}^{h_1+1} \gl_{1i} [\q_{1i}]=[\omega_{R_2(Z_1)}]$.
\end{case}

\begin{case}
If $\rho^{\bullet\bullet}([C_1])=[\omega_{R^{\bullet\bullet}}]=\gl_{01}[\q_{01}]+\gl_{11}[\p_{10}] + \sum_{i=1}^{h_1} \gl_{1i} [\q_{1i}]$, then $r=\lambda_{01}$, $s = \lambda_{11}$, and $\lambda_{1i}=0$ for all $1 \leq i \leq h_1$. So $[C_1]=[\omega_{R_2(Z_0)}]$.

If $\rho^{\bullet\bullet}([C_2])=[\omega_{R^{\bullet\bullet}}]$, then $r=\lambda_{01}$ and $s = \lambda_{11}$, so that $[C_2]= [\omega_{R}]$.
\end{case}
\end{step}

We have established that $\mathfrak S_0(R)\subseteq\{[R],[\omega_{R_2(Z_0)}],
[\omega_{R_2(Z_1)}],[\omega_R]\}$,
and hence $|\mathfrak S_0(R)| \leq |\mathfrak S_0(R_2(Z_0))| \cdot |\mathfrak S_0(R_2(Z_1))|$.
The reverse inequality is given by Corollary~\ref{lowerbound}.
\end{proof}

\begin{lem} \label{Mtype1L}
 Let $Y = Z_0 \# Z_1$ be a 2-connected ladder, where $Z_1$ is a matrix of indeterminates and $Z_1$ is any 2-connected ladder 
 with no coincidental inside corners. Let $R=R_2(Y)$. Then $\mathfrak S_0(R)=\{[R],[\omega_{R_2(Z_0)}],[\omega_{R_2(Z_1)}],[\omega_R]\}$.
In particular, $|\mathfrak S_0(R)| = |\mathfrak S_0(R_2(Z_0))| \cdot |\mathfrak S_0(R_2(Z_1))|$.
\end{lem}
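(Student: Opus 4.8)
The plan is to reduce this statement to the one-sided case already handled in Lemma~\ref{onesidedtype1} by an induction on the number of inside corners of the arbitrary two-sided factor $Z_1$, mirroring the inductive structure of the previous lemma. First I would fix notation: since $Z_0$ is a matrix we have $h_0=k_0=0$, and the coincidental corner is at $(a_{01},b_{01})=(a_{10},b_{10})$. The class group $\Cl(R)$ is free with basis consisting of $[\q_{01}]$, the ideals $[\q_{1i}]$ and $[\p_{1j}]$ coming from the lower and upper inside corners of $Z_1$, and the special ideal $[\p_{10}]$ attached to the coincidental corner; the canonical class decomposes accordingly via Fact~\ref{omega}(2), and in $\Cl(R)$ it satisfies $[\omega_R]=[\omega_{R_2(Z_0)}]+[\omega_{R_2(Z_1)}]$ as in Notation~\ref{notn:ideals}.

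The base case is when $Z_1$ is one-sided, which is exactly Lemma~\ref{onesidedtype1} (after antitransposing if necessary, so that $Z_1$ has, say, $k_1=0$). For the inductive step, I would localize by inverting a suitable corner variable of $Z_1$ to peel off one inside corner, exactly as in Steps~1 and~2 of Lemma~\ref{onesidedtype1}. The key mechanism is that inverting a variable induces a surjection $\rho\colon\Cl(R)\to\Cl(R^{\bullet})$ onto the class group of a smaller ladder determinantal ring $R^{\bullet}$, and since localizing a semidualizing module yields a semidualizing module, every class in $\s_0(R)$ must map into $\s_0(R^{\bullet})$. I would first invert $x_{1,b_{01}}$ to collapse onto $R^{\bullet}=R_2(Z_1)$ (killing $[\q_{01}]$ and $[\p_{10}]$), using the One-Sided/previous results to pin the $\q_{1i},\p_{1j}$-coordinates of any semidualizing class to either $0$ or the corresponding coordinates of $[\omega_{R_2(Z_1)}]$. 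Then I would invert the variable at a corner of $Z_1$ adjacent to the coincidental corner to obtain a ladder $Y^{\bullet\bullet}=Z_0\#Z_1^{\bullet\bullet}$ with one fewer inside corner in the second factor, to which the induction hypothesis applies, giving $\s_0(R^{\bullet\bullet})=\{[R^{\bullet\bullet}],[\omega_{R_2(Z_0)}],[\omega_{R_2(Z_1^{\bullet\bullet})}],[\omega_{R^{\bullet\bullet}}]\}$. Intersecting the constraints from these localizations forces any semidualizing class to lie in $\{[R],[\omega_{R_2(Z_0)}],[\omega_{R_2(Z_1)}],[\omega_R]\}$, establishing the inequality $|\s_0(R)|\le|\s_0(R_2(Z_0))|\cdot|\s_0(R_2(Z_1))|$. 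The reverse inequality is immediate from Corollary~\ref{lowerbound}.

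The main obstacle is the bookkeeping of the class-group coordinates: because $Z_1$ now has both lower and upper inside corners, the decomposition of $[\omega_R]$ involves both families $[\q_{1i}]$ and $[\p_{1j}]$, and I must verify that the two localizations together constrain \emph{every} coordinate, not just those visible from a single corner. Concretely, inverting one corner variable detects the $[\q_{01}],[\p_{10}]$ coordinates and (via $R^{\bullet}=R_2(Z_1)$) the internal $Z_1$-coordinates, while the second localization onto $Z_0\#Z_1^{\bullet\bullet}$ must be chosen so that the images of the basis elements are tracked precisely through the isomorphism $\Cl(R_{x})\xrightarrow{\cong}\Cl(R^{\bullet\bullet})$. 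I expect the delicate point to be confirming, as in Case~\ref{case:negative} and Case~\ref{case:positive} of Proposition~\ref{onematrixtype1}, that no spurious mixed class survives; the cleanest route is to route any potential survivor through the One-Sided Ladder Theorem and the multiplication-map obstruction of Fact~\ref{fact:sdm}\eqref{fact:multisiso}, but here I anticipate that the induction hypothesis on $Z_1^{\bullet\bullet}$ absorbs this work, so that only the coordinate-matching across the two surjections needs to be carried out explicitly.
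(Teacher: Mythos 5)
Your proposal matches the paper's own (outlined) proof: induct on the inside corners of $Z_1$ with base case Lemma~\ref{onesidedtype1}, invert corner variables to get surjections on class groups that pin down the candidate classes, and close with Corollary~\ref{lowerbound}. The only small adjustment is that in the localization giving $R^{\bullet}=R_2(Z_1)$ you should invoke the Two-Sided Ladder Theorem of \cite{SWSeSpP1} (not the one-sided version) to identify $\s_0(R_2(Z_1))$, since $Z_1$ is an arbitrary 2-connected ladder; this is exactly what the paper does.
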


\begin{proof}[Proof-outline]
  The proof mimics that above, inducting on $h_1$.  The base case of $h_1 = 0$ is given by Lemma~\ref{onesidedtype1}, and the possible semidualizing modules are determined by the Two-Sided Ladder Theorem \cite{SWSeSpP1}.  Otherwise, the argument proceeds in a similar manner.
\end{proof}

\begin{lem} \label{lem:oo}
   Let $Y = Z_0 \# Z_1$ be a 2-connected ladder, where $Z_0,Z_1$ are one-sided ladders with $h_0=k_1=0$ or $k_0=h_1=0$. Let $R=R_2(Y)$. Then $\mathfrak S_0(R)=\{[R],[\omega_{R_2(Z_0)}],[\omega_{R_2(Z_1)}],[\omega_R]\}$.
In particular, $|\mathfrak S_0(R)| = |\mathfrak S_0(R_2(Z_0))| \cdot |\mathfrak S_0(R_2(Z_1))|$.
\end{lem}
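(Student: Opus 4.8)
The plan is to run the same localization-and-induction scheme used in Lemmas~\ref{onesidedtype1} and~\ref{Mtype1L}, peeling off the inside corners of $Z_0$ one at a time until $Z_0$ becomes a matrix. First I would normalize the orientation: the antitranspose $\ti Y$ satisfies $R_2(Y)\cong R_2(\ti Y)$, so the two rings have the same number of semidualizing modules, and antitransposing interchanges upper and lower inside corners; hence it carries the configuration $k_0=h_1=0$ to the configuration $h_0=k_1=0$. So I may assume $h_0=k_1=0$, i.e.\ $Z_0$ (upper right) has only upper inside corners $(c_{0j},d_{0j})$ for $1\le j\le k_0$ and $Z_1$ (lower left) has only lower inside corners. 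I would then induct on $k_0$. When $k_0=0$ the piece $Z_0$ has no inside corners and is therefore a matrix, and since the one-sided ladder $Z_1$ is in particular a $2$-connected ladder with no coincidental inside corner, this base case is exactly Lemma~\ref{onesidedtype1} (equivalently Lemma~\ref{Mtype1L}).

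For the inductive step $k_0>0$, I would use Notation~\ref{notn:ideals}, for which a basis of $\Cl(R)$ is $\{[\q_{01}],[\p_{01}],\dots,[\p_{0k_0}],[\q_{11}],\dots,[\q_{1,h_1+1}],[\p_{10}]\}$, with $[\omega_R]=[\omega_{R_2(Z_0)}]+[\omega_{R_2(Z_1)}]$, where $[\omega_{R_2(Z_0)}]=\gl_{01}[\q_{01}]+\sum_{j}\gd_{0j}[\p_{0j}]$ is supported on the $Z_0$-classes and $[\omega_{R_2(Z_1)}]=\gl_{11}[\p_{10}]+\sum_{i}\gl_{1i}[\q_{1i}]$ is supported on the remaining classes. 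The first localization would delete the rows below and the columns to the left of the coincidental corner and invert $x_{a_{10},1}$, exactly as in Step~3 of Proposition~\ref{onematrixtype1}, yielding $Y^{\bullet}=Z_0$; the induced surjection $\rho^{\bullet}\colon\Cl(R)\to\Cl(R^{\bullet})$ then kills $[\q_{1i}]$ and $[\p_{10}]$ and fixes $[\q_{01}]$ and the $[\p_{0j}]$. Since $Z_0$ is one-sided, the One-Sided Ladder Theorem \cite{SWSeSpP1} gives $\mathfrak{S}_0(R^{\bullet})=\{[R^{\bullet}],[\omega_{R_2(Z_0)}]\}$, so for any semidualizing class $[C]$ the coordinates of $[C]$ in $[\q_{01}],[\p_{0j}]$ must match those of either $0$ or $[\omega_{R_2(Z_0)}]$, leaving only the $[\q_{1i}]$- and $[\p_{10}]$-coordinates undetermined.

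The second localization would cut off the upper inside corner of $Z_0$ nearest the top-right corner of $Y$ (the antitranspose-mirror of Step~2 of Lemma~\ref{onesidedtype1}: delete the rows above and columns to the right of that corner and invert the corresponding top-right variable), producing $Y^{\bullet\bullet}=Z_0^{\bullet\bullet}\#Z_1$ with $Z_0^{\bullet\bullet}$ one-sided and having $k_0-1$ upper inside corners and with $Z_1$ unchanged. Because the deleted region lies entirely inside $Z_0$, the map $\rho^{\bullet\bullet}$ fixes the still-free classes $[\q_{1i}],[\p_{10}]$, so matching $\rho^{\bullet\bullet}([C])$ against the four classes of $\mathfrak{S}_0(R^{\bullet\bullet})=\{[R^{\bullet\bullet}],[\omega_{R_2(Z_0^{\bullet\bullet})}],[\omega_{R_2(Z_1)}],[\omega_{R^{\bullet\bullet}}]\}$ supplied by the induction hypothesis pins those coordinates. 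A case analysis identical in form to the one in Lemma~\ref{onesidedtype1} then forces $[C]\in\{[R],[\omega_{R_2(Z_0)}],[\omega_{R_2(Z_1)}],[\omega_R]\}$, giving $\mathfrak{S}_0(R)\subseteq\{[R],[\omega_{R_2(Z_0)}],[\omega_{R_2(Z_1)}],[\omega_R]\}$ and hence $|\mathfrak{S}_0(R)|\le|\mathfrak{S}_0(R_2(Z_0))|\cdot|\mathfrak{S}_0(R_2(Z_1))|$. The reverse inequality is immediate from Corollary~\ref{lowerbound}, which yields equality and shows that all listed classes are genuinely realized.

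The main obstacle is the bookkeeping rather than any new idea: I must check that the two localizations delete exactly the intended rows and columns and that the inverted variables induce the stated action of $\rho^{\bullet}$ and $\rho^{\bullet\bullet}$ on each basis class, paying particular attention to the two classes $[\q_{01}]$ and $[\p_{10}]$ that straddle the junction, and I must track how $[\omega_{R_2(Z_0)}]$ relates to $[\omega_{R_2(Z_0^{\bullet\bullet})}]$ under $\rho^{\bullet\bullet}$ (using Fact~\ref{omega} together with the relations in Fact~\ref{omega}(3)) so that the final case analysis closes up in the same way as in Lemmas~\ref{onesidedtype1} and~\ref{Mtype1L}.
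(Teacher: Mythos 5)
Your overall architecture --- normalize to $h_0=k_1=0$ by antitransposing, run a two-localization induction, and get the reverse inclusion from Corollary~\ref{lowerbound} --- is exactly the paper's scheme, but the first localization in your inductive step does not exist, and this is a genuine gap rather than bookkeeping. You propose to collapse $Y$ to $Z_0$ by inverting $x_{a_{10},1}$ ``exactly as in Step~3 of Proposition~\ref{onematrixtype1}.'' That move is available there only because $Z_1$ is a matrix, so that row $a_{10}$ reaches column $1$. In the inductive step of your argument $Z_1$ is a one-sided ladder that may have lower inside corners, and then row $a_{10}$ only extends left to column $b_{11}>1$, so $x_{a_{10},1}\notin Y$: there is nothing to invert. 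Nor can the step be repaired by picking a different variable: the one corner of $Z_1$ from which the whole staircase $Z_1$ is ``visible'' (using $k_1=0$) is its bottom-right corner $(m,b_{10})$, and inverting $x_{m,b_{10}}$ is the analogue of Step~4, not Step~3, of Proposition~\ref{onematrixtype1}; it does collapse $Y$ onto $Z_0$, but it sends $[\q_{1i}]$ and $[\p_{10}]$ to nonzero classes such as $\pm[\q_{01}]$ rather than to $0$. Your key assertion that ``the coordinates of $[C]$ in $[\q_{01}],[\p_{0j}]$ must match those of $0$ or $[\omega_{R_2(Z_0)}]$'' then fails, and closing the argument from that map requires the extra relations and the multiplication-map eliminations of Cases~\ref{case:negative}--\ref{case:positive} of Proposition~\ref{onematrixtype1}, which you have not supplied. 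More structurally, the kernel of $\Cl(R)\to\Cl(R_{x})$ is generated by the classes of the height-one primes containing $x$, and a single variable lies in at most one of the row ideals $\q_{11},\dots,\q_{1,h_1+1}$, so no single inversion can have kernel equal to $\bbz[\q_{11}]\oplus\dots\oplus\bbz[\q_{1,h_1+1}]\oplus\bbz[\p_{10}]$ once $h_1\geq 2$.

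The paper sidesteps this by running the induction in the opposite direction: it inducts on $h_1$, not on $k_0$. The block collapsed in a single localization is $Z_0$, via $x_{1,b_{01}}$, and this works for arbitrary $k_0$ precisely because $h_0=0$ forces every row of $Z_0$ to begin at column $b_{01}$, so all of $Z_0$ is visible from $(1,b_{01})$ and the kernel of $\rho^{\bullet}$ is exactly the span of $[\q_{01}]$, the $[\p_{0j}]$, and $[\p_{10}]$; the staircase block $Z_1$ is then peeled one lower corner at a time by inverting $x_{a_{1,h_1},1}$, exactly as in Steps~1 and~2 of Lemma~\ref{onesidedtype1}. Your second localization (removing the top upper corner of $Z_0$) is fine in itself, but it must be paired with a first localization that genuinely pins the coordinates it leaves free; reorienting your induction to match the paper's (collapse the corner-free side of the join in one step, peel the other side corner by corner) is the clean fix.
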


\begin{proof}[Proof-outline]
  The proof is again similar.  It may be assumed that $h_0=k_1=0$, for if $Y = Z_0 \# Z_1$ with $k_0=h_1=0$, then $Y$ can be antitransposed.  Inducting on $h_1$, with the base case of $h_1 = 0$ given by Lemma~\ref{onesidedtype1}, proceed in the manner above.  
 \end{proof}

\begin{lem} \label{lem:ol}
  Let $Y = Z_0 \# Z_1$ be a 2-connected ladder, where $Z_0$ is a one-sided ladder and $Z_1$ is any 2-connected ladder with no coincidental inside corners. Let $R=R_2(Y)$. Then $\mathfrak S_0(R)=\{[R],[\omega_{R_2(Z_0)}],$ $[\omega_{R_2(Z_1)}],[\omega_R]\}$.
In particular, $|\mathfrak S_0(R)| = |\mathfrak S_0(R_2(Z_0))| \cdot |\mathfrak S_0(R_2(Z_1))|$.
\end{lem}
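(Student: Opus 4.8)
The plan is to reproduce the template established in Proposition~\ref{onematrixtype1} through Lemma~\ref{lem:oo}: prove the upper bound $\mathfrak S_0(R)\subseteq\{[R],[\omega_{R_2(Z_0)}],[\omega_{R_2(Z_1)}],[\omega_R]\}$ by a localization-and-induction argument, and then cite Corollary~\ref{lowerbound} for the reverse inequality $|\mathfrak S_0(R_2(Z_0))|\cdot|\mathfrak S_0(R_2(Z_1))|\leq|\mathfrak S_0(R)|$. Since $Z_0$ is one-sided it has only lower inside corners ($k_0=0$) or only upper inside corners ($h_0=0$); these two orientations are treated identically, differing only in which boundary variable is inverted, so I would carry out the argument for one of them. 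The natural induction is on the number of inside corners of $Z_0$, keeping $Z_1$ an arbitrary $2$-connected ladder with no coincidental inside corner throughout. The base case is when $Z_0$ has no inside corners, i.e.\ $Z_0$ is a matrix of indeterminates, which is precisely Lemma~\ref{Mtype1L}. (Inducting instead on the corners of $Z_1$ toward the base case Lemma~\ref{lem:oo} runs into an orientation mismatch, since antitransposing $Y=Z_0\#Z_1$ moves the one-sided factor into the second slot; reducing $Z_0$ avoids this.)

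For the inductive step I would record $\Cl(R)$ in terms of the corner-ideal classes $[\q_{ui}],[\p_{uj}]$ of Notation~\ref{notn:ideals} together with the single class $[\p_{10}]$ attached to the coincidental corner, and note $[\omega_R]=[\omega_{R_2(Z_0)}]+[\omega_{R_2(Z_1)}]$. Then I would construct surjections of class groups by inverting suitable boundary variables. The first, $\rho^{\bullet}$, deletes the $Z_0$-region so that $R^{\bullet}\cong R_2(Z_1)$; applying the Two-Sided Ladder Theorem of~\cite{SWSeSpP1} forces the $Z_1$-component of any semidualizing class of $R$ to be $0$ or $[\omega_{R_2(Z_1)}]$. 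The second, $\rho^{\bullet\bullet}$, deletes one ``step'' of $Z_0$, giving $R^{\bullet\bullet}\cong R_2(Z_0^{\bullet\bullet}\#Z_1)$ with $Z_0^{\bullet\bullet}$ one-sided with one fewer inside corner, so the induction hypothesis applies; a third localization deleting $Z_1$ (with the One-Sided Ladder Theorem) constrains the $Z_0$-components. Because localization sends semidualizing modules to semidualizing modules, intersecting the constraints from these maps cuts the list of candidate classes down to the four desired ones, up to finitely many degenerate coincidences of coefficients.

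The main obstacle, exactly as in Proposition~\ref{onematrixtype1}, is the degenerate situation in which the relevant coefficients (the analogues of $\lambda_{01}$ and $\lambda_{11}$) agree up to sign, so a spurious candidate class survives every localization. Such classes cannot be discarded by class-group bookkeeping; instead I would identify the candidate with an explicit intersection of powers of the corner ideals via~\cite[Lemma~2.1]{SWSeSpP1}, realize it and its partner as ideals $\fa,\fb$, and exhibit two distinct simple tensors in $\fa\otimes_R\fb$ with equal image under the multiplication map $\mu\colon\fa\otimes_R\fb\to\fa\fb$. This shows $\mu$ is not injective, contradicting Fact~\ref{fact:sdm}\eqref{fact:multisiso}, so the candidate is not semidualizing; the two sign cases are interchanged by antitransposing $Y$. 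With the upper bound $\mathfrak S_0(R)\subseteq\{[R],[\omega_{R_2(Z_0)}],[\omega_{R_2(Z_1)}],[\omega_R]\}$ established, Corollary~\ref{lowerbound} furnishes the matching lower bound and hence the stated equality $|\mathfrak S_0(R)|=|\mathfrak S_0(R_2(Z_0))|\cdot|\mathfrak S_0(R_2(Z_1))|$.
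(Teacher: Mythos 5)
Your proposal is correct and reaches the right answer, but it is organized differently from the paper's proof, so a comparison is worthwhile. The paper runs a \emph{double} induction on $(h_0,h_1)$: after antitransposing to assume $k_0=0$, Step~1 inverts a single variable to trim one lower corner off $Z_0$ (kernel $\bbz[\q_{01}]$, base case Lemma~\ref{Mtype1L}), and Step~2 inverts a single variable to trim one lower corner off $Z_1$ (base case Lemma~\ref{lem:oo}); intersecting the two constraint sets via a case analysis pins every candidate to one of the four classes. You instead induct only on the number of corners of $Z_0$, with localizations that collapse an entire factor ($Z_0$, then $Z_1$) plus one that trims a corner off $Z_0$. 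This buys you a single induction with the single base case Lemma~\ref{Mtype1L}, at the cost that collapsing a whole one-sided or two-sided factor is not a single-variable inversion but an iterated one, whose composite class-group surjection and kernel you must still compute; also, your ``collapse $Z_0$'' step is in fact redundant, since the corner-trimming map already has kernel $\bbz[\q_{01}]$ and, combined with the induction hypothesis, leaves only one free integer parameter, which the ``collapse $Z_1$'' map then fixes. The one place where your proposal overestimates the difficulty is the anticipated degenerate case requiring the multiplication-map argument of Fact~\ref{fact:sdm}\eqref{fact:multisiso}: in this inductive step no spurious candidate survives the bookkeeping, precisely because the induction hypothesis identifies $\mathfrak S_0$ of the smaller ladder as four \emph{specific} classes rather than merely bounding its size; the $\mu$-non-injectivity trick is genuinely needed only at the bottom of the tower, in Proposition~\ref{onematrixtype1}, on which both routes ultimately rest. (Your parenthetical worry about an ``orientation mismatch'' when inducting on $Z_1$ is also not an obstruction in the paper's version, since it reduces $Z_1$ by localization rather than by antitransposition.) Your closing appeal to Corollary~\ref{lowerbound} for the reverse inclusion matches the paper exactly.
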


\begin{proof}
  By antitransposing if necessary, we may assume that $k_0=0$ for the ladder $Z_0$. The class group of $R$ has $\{[\q_{01}], \dots, [\q_{0,h_0+1}], [\q_{11}], \dots,$ $[\q_{1,h_1+1}],[\p_{10}], \dots, [\p_{1k_1}]\}$ as a basis; i.e., $\Cl(R) \cong \bbz^{h_0+h_1+k_1+3}$. The canonical class of $R$ is
\[
  [\omega_R] = \sum_{i=1}^{h_0+1}\gl_{0i}[\q_{0i}] + \sum_{i=1}^{h_1+1} \gl_{1i} [\frak q_{1i}] + \gl_{11}[\p_{10}] + \sum_{j=1}^{k_1}\delta_{1j} [\frakp_{1j}].
\]

We will prove the Lemma by double induction on $(h_0,h_1) \in \bbn^2$. First, we induct on $h_0$. The case $h_0 = 0$ is given by Lemma~\ref{Mtype1L}. So we may assume that $h_0>0$ and that this Lemma holds by induction for $(h_0-1,g)$ for any $g \in \bbn$. Next, we induct on $h_1$. The case $h_1=0$ is given by Lemma~\ref{lem:oo}. Thus, let $h_1 > 0$ and assume by induction that this Lemma holds for $(h_0,g)$ for all $g<h_1$. As before, $C_1,C_2,\dots$ will denote possible semidualizing modules of $R$.

\setcounter{step}{0}
\begin{step}
 Obtain the ladder $Y^{\bullet}$ by deleting rows $1,\dots,a_{01}-1$ and columns $b_{01}+1,\dots, b_{00}$ of $Y$. Then $Y^{\bullet}=Z_0^{\bullet} \# Z_1$, where $Z_0^{\bullet}$ is a one-sided ladder with one fewer inside corner than $Z_0$.  Invert $x_{1, b_{01}}$ in $R$.  Under the natural map $\rho^{\bullet} \colon \Cl(R) \to \Cl(R^{\bullet})$, where $R^{\bullet} = R_2(Y^{\bullet})$, we have $[\q_{01}] \mapsto 0$, and all other basis elements of $\Cl(R)$ are mapped to their same representations in $\Cl(R^{\bullet})$, so $\ker(\rho^{\bullet})=\bbz[\q_{01}]$. Since $Z_0^{\bullet}$ has $h_0-1$ lower inside corners, the induction hypothesis gives $\s_0(R^{\bullet})=\{0, [\omega_{R_2(Z_0^{\bullet})}], [\omega_{R_2(Z_1)}], [\omega_{R^{\bullet}}] \}$. Thus, $R$ has four possible classes of semidualizing modules $[C_1],[C_2],[C_3],[C_4]$, where
\begin{align*}
  [C_1] -  r[\q_{01}] &= 0,\\
  [C_2] - s[\q_{01}] &= \sum_{i=2}^{h_0+1} \gl_{0i} [\q_{0i}] = [\omega_{R_2(Z_0)}]-\gl_{01}[\q_{01}],\\
  [C_3] - u[\q_{01}] &= \gl_{11}[\p_{10}] + \sum_{i=1}^{h_1+1} \gl_{1i} [\q_{1i}] + \sum_{j=1}^{k_1} \gd_{1j}[\p_{1j}]
                                   = [\omega_{R_2(Z_1)}] \text{ and}\\
  [C_4] - v[\q_{01}] &= \sum_{i=2}^{h_0+1} \gl_{0i} [\q_{0i}] + \gl_{11}[\p_{10}] + \sum_{i=1}^{h_1+1} \gl_{1i} [\q_{1i}] + \sum_{j=1}^{k_1} \gd_{1j}[\p_{1j}]\\
                                 &= [\omega_R]-\gl_{01}[\q_{01}],
\end{align*}
such that $r,s,u,v \in \bbz$, and the classes $0, [\omega_{R_2(Z_0^{\bullet})}], [\omega_{R_2(Z_1)}], [\omega_{R^{\bullet}}]$ have the same representations in $\Cl(R^{\bullet})$ as on the right hand side.
\end{step}

\begin{step}
  Let $\kappa_1 = \min\{i \mid c_{1i} \geq a_{1,h_1}\}$, as in \cite[Notation 3.1]{SWSeSpP1}. We obtain $Y^{\bullet\bullet}$ by deleting rows $a_{1,h_1}+1,\dots, a_{1,h_1+1}$ and columns $1,\dots,b_{1,h_1}-1$ of $Y$. Then $Y^{\bullet\bullet} = Z_0 \# Z_1^{\bullet\bullet}$, where $Z_1^{\bullet\bullet}$ is a ladder with $h_1-1$ lower inside corners and $\kappa_1-1$ upper inside corners. Invert $x_{a_{1,h_1}, 1}$ in $R$.  Under the natural map $\rho^{\bullet\bullet} \colon \Cl(R) \to \Cl(R^{\bullet\bullet})$, we have $[\q_{1,h_1+1}],[\p_{1,\kappa_1}],\dots,[\p_{1,k}] \mapsto 0$, and all other basis elements of $\Cl(R)$ are mapped to their same representations in $\Cl(R^{\bullet\bullet})$.  Since $Z_1^{\bullet\bullet}$ has $h_1-1$ lower inside corners, the induction hypothesis gives $\s_0 (R^{\bullet\bullet})=\{0, [\omega_{R_2(Z_0)}], [\omega_{R_2(Z_1^{\bullet\bullet})}], [\omega_{R^{\bullet\bullet}}] \}$, where
\begin{align*}
  [\omega_{R_2(Z_0)}] &=  \sum_{i=1}^{h_0+1} \gl_{0i} [\q_{0i}],\\
  [\omega_{R_2(Z_1^{\bullet\bullet})}] &= \gl_{11}[\p_{10}] + \sum_{i=1}^{h_1} \gl_{1i} [\q_{1i}] + \sum_{j=1}^{\kappa_1-1} \gd_{1j}[\p_{1j}],
\end{align*}
and $[\omega_{R^{\bullet\bullet}}] = [\omega_{R_2(Z_0)}] + [\omega_{R_2(Z_1^{\bullet\bullet})}]$.
We consider several cases.

\setcounter{case}{0}
\begin{case}        
If $\rho^{\bullet\bullet}([C_1])=0$, then $r=0$, so $[C_1]=0$.

If $\rho^{\bullet\bullet}([C_2])=0$, then $s = 0$ and $\gl_{0i}=0$ for all $2 \leq i \leq h_0+1$, so $[C_2]=0$.

If $\rho^{\bullet\bullet}([C_3])=0$, then $u = 0$, $\gl_{1i}=0$ for all $1 \leq i \leq h_1$ and $\gd_{1j}=0$ for all $1 \leq j \leq \kappa_1-1$. Then $[C_3]=\gl_{11}[\p_{10}] + \sum_{i=1}^{h_1+1} \gl_{1i} [\q_{1i}] + \sum_{j=1}^{k_1} \gd_{1j}[\p_{1j}]=[\omega_{R_2(Z_1)}]$.

If $\rho^{\bullet\bullet}([C_4])=0$, then $v = 0$, $\gl_{0i}=0$ for all $2 \leq i \leq h_0+1$, $\gl_{1i}=0$ for all $1 \leq i \leq h_1$ and $\gd_{1j}=0$ for all $1 \leq j \leq \kappa_1-1$, so $[C_4]=[\omega_{R_2(Z_1)}]$.
\end{case}

\begin{case}
If $\rho^{\bullet\bullet}([C_1])=[\omega_{R_2(Z_0)}]$, then $r = \gl_{01}$ and $\gl_{0i} = 0$ for all $2 \leq i \leq h_0+1$. Then $[C_1] = \sum_{i=1}^{h_0+1} \gl_{0i} [\q_{0i}] = [\omega_{R_2(Z_0)}]$.

If $\rho^{\bullet\bullet}([C_2])=[\omega_{R_2(Z_0)}]$, then $s = \gl_{01}$, so $[C_2] = [\omega_{R_2(Z_0)}]$.

If $\rho^{\bullet\bullet}([C_3])=[\omega_{R_2(Z_0)}]$, then $u = \gl_{01}$, $\gl_{0i}=0$ for all $2 \leq i \leq h_0+1$, $\gl_{1i}=0$ for all $1 \leq i \leq h_1$ and $\gd_{1j}=0$ for all $1 \leq j \leq \kappa_1-1$, so $[C_3]=[\omega_R]$.

If $\rho^{\bullet\bullet}([C_4])=[\omega_{R_2(Z_0)}]$, then $v = \gl_{01}$, $\gl_{1i}=0$ for all $1 \leq i \leq h_1$ and $\gd_{1j}=0$ for all $1 \leq j \leq \kappa_1-1$, so $[C_4]=[\omega_R]$.
\end{case}

\begin{case}
If $\rho^{\bullet\bullet}([C_1])=[\omega_{R_2(Z_1^{\bullet\bullet})}]$, then $r=0$, so $[C_1]=0$.

If $\rho^{\bullet\bullet}([C_2])=[\omega_{R_2(Z_1^{\bullet\bullet})}]$, then $s=0$ and $\gl_{0i}=0$ for all $2 \leq i \leq h_0+1$, so $[C_2]=0$.

If $\rho^{\bullet\bullet}([C_3])=[\omega_{R_2(Z_1^{\bullet\bullet})}]$, then $u = 0$, so $[C_3]=[\omega_{R_2(Z_1)}]$. 

If $\rho^{\bullet\bullet}([C_4])=[\omega_{R_2(Z_1^{\bullet\bullet})}]$, then $v = 0$ and $\gl_{0i}=0$ for all $2 \leq i \leq h_0+1$, so $[C_4]=[\omega_{R_2(Z_1)}]$.
\end{case}

\begin{case}
If $\rho^{\bullet\bullet}([C_1])=[\omega_{R^{\bullet\bullet}}]$, then $r = \gl_{01}$ and $\gl_{0i}=0$ for all $2 \leq i \leq h_0+1$, so $[C_1] = [\omega_{R_2(Z_0)}]$.

If $\rho^{\bullet\bullet}([C_2])=[\omega_{R^{\bullet\bullet}}]$, then $s = \gl_{01}$, so $[C_2] = [\omega_{R_2(Z_0)}]$.

If $\rho^{\bullet\bullet}([C_3])=[\omega_{R^{\bullet\bullet}}]$, then $u = \gl_{01}$ and $\gl_{0i}=0$ for all $2 \leq i \leq h_0+1$, so $[C_3]=[\omega_R]$.

If $\rho^{\bullet\bullet}([C_4])=[\omega_{R^{\bullet\bullet}}]$, then $v = \gl_{01}$, so $[C_4]=[\omega_R]$.
\end{case}
\end{step}

We have now shown that $\mathfrak S_0(R)\subseteq\{[R],
[\omega_{R_2(Z_0)}],[\omega_{R_2(Z_1)}],[\omega_R]\}$,
and the reverse inclusion is given by Corollary~\ref{lowerbound}.
\end{proof}

\begin{prop} \label{Ltype1L}
  Let $Y = Z_0 \# Z_1$ be a 2-connected ladder, where $Z_0,Z_1$ are 2-connected ladders with no coincidental inside corners. Let $R=R_2(Y)$. Then $\mathfrak S_0(R)=\{[R],[\omega_{R_2(Z_0)}],$ $[\omega_{R_2(Z_1)}],[\omega_R]\}$.
In particular, $|\mathfrak S_0(R)| = |\mathfrak S_0(R_2(Z_0))| \cdot |\mathfrak S_0(R_2(Z_1))|$.
\end{prop}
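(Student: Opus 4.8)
The plan is to mirror the proof of Lemma~\ref{lem:ol} almost verbatim, running a double induction on the pair $(h_0,h_1)$ of lower-inside-corner counts of $Z_0$ and $Z_1$, while carrying along the extra class-group bookkeeping forced by the fact that $Z_0$ may now possess upper inside corners. First I would record the class group
$$\Cl(R) \cong \bigoplus_{i=1}^{h_0+1} \bbz[\q_{0i}] \oplus \bigoplus_{j=1}^{k_0}\bbz[\p_{0j}] \oplus \bbz[\p_{10}] \oplus \bigoplus_{i=1}^{h_1+1}\bbz[\q_{1i}] \oplus \bigoplus_{j=1}^{k_1}\bbz[\p_{1j}],$$
together with the decomposition $[\omega_R] = [\omega_{R_2(Z_0)}] + [\omega_{R_2(Z_1)}]$ from Notation~\ref{notn:ideals}. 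The only generators that are genuinely new relative to Lemma~\ref{lem:ol} are the classes $[\p_{0j}]$ arising from the upper corners of $Z_0$.

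For the base cases I would argue as follows. If $h_0=0$, then $Z_0$ has no lower inside corners and is therefore one-sided, so the claim is exactly Lemma~\ref{lem:ol}. If $h_1=0$, then $Z_1$ is one-sided, and antitransposing $Y$ to $\ti{Y}=\ti{Z_1}\#\ti{Z_0}$ (an operation that preserves $|\s_0|$, the canonical class, and the Gorenstein property) puts a one-sided ladder in the first slot, so Lemma~\ref{lem:ol} again applies. Thus it suffices to treat $h_0,h_1>0$.

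For the inductive step I would run the two localizations of Lemma~\ref{lem:ol}. Inverting the top corner variable of $Z_0$ (the analogue of $x_{1,b_{01}}$) yields $Y^{\bullet}=Z_0^{\bullet}\#Z_1$, where $Z_0^{\bullet}$ has $h_0-1$ lower corners, and the induction hypothesis gives $\s_0(R^{\bullet})=\{0,[\omega_{R_2(Z_0^{\bullet})}],[\omega_{R_2(Z_1)}],[\omega_{R^{\bullet}}]\}$; pulling back along $\rho^{\bullet}\colon\Cl(R)\to\Cl(R^{\bullet})$ cuts the candidate classes of a semidualizing $C$ down to four one-parameter families. Inverting the bottom corner variable of $Z_1$ (the analogue of $x_{a_{1,h_1},1}$) yields $Y^{\bullet\bullet}=Z_0\#Z_1^{\bullet\bullet}$ with $Z_1^{\bullet\bullet}$ having $h_1-1$ lower corners, and a case analysis on the images under $\rho^{\bullet\bullet}$, paralleling the four cases of Lemma~\ref{lem:ol}, forces each candidate into $\{[R],[\omega_{R_2(Z_0)}],[\omega_{R_2(Z_1)}],[\omega_R]\}$. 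The reverse inclusion is Corollary~\ref{lowerbound}; since $Z_0,Z_1$ have no coincidental corners, each $|\s_0(R_2(Z_i))|\leq 2$, so the cardinalities match and equality holds.

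The hard part will be the extra bookkeeping created by the upper-corner generators $[\p_{0j}]$. I must verify that the Step-1 localization, which straightens the topmost lower corner of $Z_0$, leaves the upper corners of $Z_0$ intact, so that $Z_0^{\bullet}$ is a genuine two-sided ladder with the same $k_0$ and the induction hypothesis truly applies; here the hypothesis that $Z_0$ has \emph{no} coincidental inside corner is exactly what keeps the lower corner being straightened separated from the upper corners. I also have to confirm that both $\rho^{\bullet}$ and $\rho^{\bullet\bullet}$ send each $[\p_{0j}]$ to the corresponding generator (rather than to $0$ or to a combination), so that the $\p_{0j}$-coefficients of a candidate $C$ are transported rigidly through both localizations and cannot generate a spurious semidualizing class. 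Should the clean case analysis of Lemma~\ref{lem:ol} fail to eliminate some mixed candidate, the fallback is the non-injectivity-of-multiplication argument of Proposition~\ref{onematrixtype1} via Fact~\ref{fact:sdm}\eqref{fact:multisiso}, though I expect the two localizations to suffice exactly as they did in Lemma~\ref{lem:ol}.
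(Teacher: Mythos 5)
Your overall strategy---double induction on $(h_0,h_1)$ with base cases drawn from Lemma~\ref{lem:ol}, the same two corner-straightening localizations, and Corollary~\ref{lowerbound} for the reverse inclusion---is exactly the paper's. However, the two details you single out as ``the hard part'' are both resolved incorrectly. First, the antitranspose does not swap the factors: from its definition one gets $\ti{Y}=\ti{Z_0}\#\ti{Z_1}$, i.e.\ each block stays in its slot while its lower and upper inside corners are exchanged. So antitransposing does not move the one-sided factor $Z_1$ into the first slot, and your reduction of the base case $h_1=0$ to Lemma~\ref{lem:ol} fails as written. (The ordinary transpose would swap the slots, but that is a different operation from the one defined in the paper.)

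Second, and more substantively, your expectation that the Step-1 localization ``leaves the upper corners of $Z_0$ intact'' and that $\rho^{\bullet}$ sends each $[\p_{0j}]$ to the corresponding generator is false. Deleting rows $1,\dots,a_{01}-1$ and columns $b_{01}+1,\dots,b_{00}$ to straighten the topmost lower corner of $Z_0$ also destroys every upper corner $(c_{0j},d_{0j})$ of $Z_0$ with $d_{0j}\geq b_{01}$; setting $\kappa_2=\max\{j\mid d_{0j}\geq b_{01}\}$, one finds $\ker(\rho^{\bullet})=\bbz[\q_{01}]\oplus\bbz[\p_{01}]\oplus\dots\oplus\bbz[\p_{0\kappa_2}]$ rather than $\bbz[\q_{01}]$. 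This is precisely the ``one difference'' from Lemma~\ref{lem:ol} that the paper's proof isolates. Consequently each candidate class after Step~1 carries $\kappa_2+1$ undetermined coefficients rather than one, and these are pinned down not by being ``transported rigidly'' but by the second localization: $\ker(\rho^{\bullet\bullet})=\bbz[\q_{1,h_1+1}]\oplus\bbz[\p_{1,\kappa_1}]\oplus\dots\oplus\bbz[\p_{1,k_1}]$ involves a disjoint set of basis elements, so it meets $\ker(\rho^{\bullet})$ only in $0$ and the case analysis of Lemma~\ref{lem:ol} still closes. The strategy therefore survives, and no multiplication-map fallback in the style of Proposition~\ref{onematrixtype1} is needed, but as written your verification plan would break down at exactly the step you flagged as critical.
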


\begin{proof}
  The proof mimics that of Lemma~\ref{lem:ol}, inducting on $(h_0,h_1) \in \bbn^2$. First, induct on $h_0$, where the case $h_0=0$ is given by Lemma~\ref{lem:ol}; then assume that $h_0>0$ and that the Proposition holds by induction for $(h_0-1,g)$ for any $g \in \bbn$. Next, induct on $h_1$, where the base case $h_1=0$ is given by Lemma~\ref{lem:ol}. With $h_1>0$, assume by induction that the Proposition holds for $(h_0,g)$ for all $g<h_1$.   One difference from the previous proof is in step 1.  Here we let $\kappa_2=\max\{j \mid d_{0j} \geq b_{01}\}$ as in \cite[Notation 3.1]{SWSeSpP1}, and obtain the ladder $Y^{\bullet}$ by deleting rows $1,\dots,a_{01}-1$ and columns $b_{01}+1,\dots, b_{00}$ of $Y$.  The remainder of the proof is now similar to that of Lemma~\ref{lem:ol}.

\end{proof}

%~~~~~~~~~~~~~~~~~~~~~~~~~~~~~~~~~~~~~~~~~~~~~~~~~~~~~~~~~~~~~~~~~~~~~~~
\subsection{The general argument}
%~~~~~~~~~~~~~~~~~~~~~~~~~~~~~~~~~~~~~~~~~~~~~~~~~~~~~~~~~~~~~~~~~~~~~~~

\begin{thm}  \label{laddertype1s} 
  Let $Y= Z_0 \# \cdots \# Z_w$ be a 2-connected ladder, where each $Z_u$ is a 2-connected ladder with no coincidental inside corners. Let $R=R_2(Y)$. Then $\mathfrak S_0(R)=\{\,\sum_{u=0}^w \te_u [\omega_{R_2(Z_u)}] \mid \te_u = 0 \text{ or } 1\}$. In particular, $|\mathfrak S_0(R)| = |\mathfrak S_0(R_2(Z_0))| \cdot \dots \cdot |\mathfrak S_0(R_2(Z_w))|=2^{\ve_0 + \dots + \ve_w}$, where $\ve_u=0$ if $R_2(Z_u)$ is Gorenstein and $\ve_u=1$ otherwise.
\end{thm}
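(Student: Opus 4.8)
The plan is to induct on the number $w$ of coincidental inside corners, treating Proposition~\ref{Ltype1L} (the case $w=1$) and the One-Sided/Two-Sided Ladder Theorems of \cite{SWSeSpP1} (the case $w=0$, where $\mathfrak S_0(R_2(Z_0)) = \{[R_2(Z_0)],[\omega_{R_2(Z_0)}]\}$) as the base cases. The ``easy'' inclusion and the lower bound on cardinality come essentially for free: writing $Y = (Z_0\#\cdots\#Z_{w-1})\#Z_w$ and using the associativity of the operation $\#$, one iterates Corollary~\ref{lowerbound} to obtain an injection $\prod_{u=0}^w\mathfrak S_0(R_2(Z_u))\hookrightarrow\mathfrak S_0(R)$. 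Since each $Z_u$ has no coincidental inside corner, $\mathfrak S_0(R_2(Z_u))=\{[R_2(Z_u)],[\omega_{R_2(Z_u)}]\}$, so this already shows that every class $\sum_{u=0}^w\te_u[\omega_{R_2(Z_u)}]$ with $\te_u\in\{0,1\}$ is semidualizing and, via the identification $[\omega_{R}]=\sum_{u=0}^w[\omega_{R_2(Z_u)}]$ recorded in Notation~\ref{notn:ideals}, that these account for at least $2^{\ve_0+\cdots+\ve_w}$ distinct semidualizing modules. Thus the real content of the theorem is the reverse inclusion $\mathfrak S_0(R)\subseteq\{\sum_u\te_u[\omega_{R_2(Z_u)}]\}$.

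For that reverse inclusion I would run, in the inductive step, the same variable-inversion strategy used throughout Section~\ref{sec3}. Decompose $Y = Z_0\#W$ with $W = Z_1\#\cdots\#Z_w$, and repeat the scheme of Proposition~\ref{Ltype1L} (itself an induction on $h_0$): invert a variable of $Z_0$ near the first coincidental corner to produce a surjection $\rho^{\bullet}\colon\Cl(R)\to\Cl(R_2(W))$ collapsing $Z_0$, and invert a variable near the last corner to produce $\rho^{\bullet\bullet}\colon\Cl(R)\to\Cl(R_2(Z_0\#\cdots\#Z_{w-1}))$ collapsing $Z_w$. The point is that each reduced ring is again a $\#$-chain, now with only $w-1$ coincidental corners, so by the inductive hypothesis on $w$ its semidualizing modules are known exactly. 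Because the localization of a semidualizing module is semidualizing, the image $\rho^{\bullet}(C)$ of any $C\in\mathfrak S_0(R)$ must be one of the classes $\sum_{u\ge1}\te_u[\omega_{R_2(Z_u)}]$, and likewise $\rho^{\bullet\bullet}(C)\in\{\sum_{u\le w-1}\te_u[\omega_{R_2(Z_u)}]\}$; matching these images against the basis of $\Cl(R)$ from Notation~\ref{notn:ideals} pins down the coordinates of $C$ on the interior of each block $Z_u$.

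I expect the main obstacle to be the bookkeeping at the coincidental corners and the resulting elimination of ``mixed'' candidate classes. The two projections fix the coordinates of $C$ on the interior of every $Z_u$, but they leave undetermined the coefficients of the shared-corner classes $[\p_{u0}]$ (which lie in the relevant kernels), so the surviving candidates have the form $\sum_u\te_u[\omega_{R_2(Z_u)}]$ plus a possibly-nonzero corner contribution, governed by the relations $[\q_{ui}]+[\q'_{ui}]+\sum_{j}[\p_{uj}]=0$ of Fact~\ref{omega}(3). Ruling these out is precisely the difficulty already met in the Cases of Proposition~\ref{onematrixtype1}: when a candidate equals a genuine canonical sum plus a nonzero multiple of a corner class, one realizes the two factors as explicit powers of the corner ideals and exhibits two distinct elements with the same image under the multiplication map $\mu\colon\fa\otimes_R\fb\to\fa\fb$, contradicting Fact~\ref{fact:sdm}\,\eqref{fact:multisiso}; antitransposing disposes of the symmetric sign cases. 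I would isolate this corner obstruction as a single lemma so that it can be invoked uniformly at every coincidental corner rather than re-derived each time.

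Finally, combining the two inclusions gives $\mathfrak S_0(R)=\{\sum_{u=0}^w\te_u[\omega_{R_2(Z_u)}]:\te_u\in\{0,1\}\}$. The count then follows because $[\omega_{R_2(Z_u)}]=0$ in $\Cl(R)$ exactly when $R_2(Z_u)$ is Gorenstein, i.e.\ $\ve_u=0$ (Fact~\ref{Gorprop}), while the injection of Corollary~\ref{lowerbound} guarantees that the remaining classes are pairwise distinct; hence $|\mathfrak S_0(R)| = \prod_{u=0}^w|\mathfrak S_0(R_2(Z_u))| = 2^{\ve_0+\cdots+\ve_w}$.
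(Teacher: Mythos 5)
Your proposal is correct and follows essentially the same route as the paper: induction on $w$ with the Two-Sided Ladder Theorem and Proposition~\ref{Ltype1L} as base cases, two variable-inversions giving projections $\rho^{\bullet},\rho^{\bullet\bullet}$ onto class groups of smaller $\#$-chains, and Corollary~\ref{lowerbound} supplying both the reverse inclusion and the distinctness of the $2^{\ve_0+\cdots+\ve_w}$ classes. The only substantive divergence is at the corner bookkeeping: the paper's inductive step never reinvokes the multiplication-map argument of Fact~\ref{fact:sdm}\eqref{fact:multisiso} --- matching $\rho^{\bullet}([C])$ and $\rho^{\bullet\bullet}([C])$ against the (already exactly known) semidualizing sets of the reduced rings forces the stray coefficients of $[\q_{01}]$, $[\p_{u0}]$, etc.\ to vanish by linear algebra alone, so that argument is confined to the minimal base case Proposition~\ref{onematrixtype1}; also, when $Z_0$ or $Z_w$ has its own inside corners the localization peels off one corner at a time via a nested induction on $(h_0,h_w)$ rather than collapsing a whole block in one step, which your phrase ``collapsing $Z_0$'' elides but your reference to the induction on $h_0$ in Proposition~\ref{Ltype1L} implicitly covers.
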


\begin{proof}
  An outline of the proof of the Theorem is as follows. First, we induct on the number of coincidental inside corners $w \in \bbn$. The case $w=0$ is given by the Two-Sided Ladder Theorem \cite{SWSeSpP1}. The case $w=1$ is given by Proposition~\ref{Ltype1L}. Thus, we may assume that $w>1$.

  Next, for each $w>1$, we need to consider in turn the six different combinations of $Z_0$ and $Z_w$ as in Proposition~\ref{onematrixtype1} to Proposition~\ref{Ltype1L}, viz.\ $Z_0$ and $Z_w$ are matrices of indeterminates, $Z_0$ is a matrix of indeterminates and $Z_w$ is a 2-connected one-sided ladder, and so on, until $Z_0$ and $Z_w$ are arbitrary 2-connected ladders. The proofs for the six different combinations are similar, as outlined above, hence we will show here only the proofs of two combinations.

  Let us consider the case when $Z_0,Z_w$ are matrices of indeterminates, for example.

\setcounter{step}{0}
\begin{step} \label{step:getc}
   Obtain the ladder $Y^{\bullet}$ by deleting rows $1,\dots,a_{01}-1$ and columns $b_{01}+1,\dots, b_{00}$ of $Y$; that is, $Y^{\bullet} = Z_1\# \cdots \# Z_w$. Invert $x_{1, b_{01}}$ in $R$. Under the natural map $\rho^{\bullet} \colon \Cl(R) \to \Cl(R^{\bullet})$, where $R^{\bullet} = R_2(Y^{\bullet})$, we have $\ker(\rho^{\bullet}) = \bbz[\q_{01}] \oplus \bbz[\p_{10}]$. Let $[K] = r[\q_{01}]+s[\p_{10}] \in \Cl(R)$, where $r,s \in \bbz$. Induction on $w$ gives $\s_0(R^{\bullet})=\{\,\sum_{u=1}^w \te_u [\omega_{R_2(Z_u)}] \mid \te_u = 0 \text{ or } 1\}$. Hence the possible classes of semidualizing modules of $R$ have the form $[C]= [K]+\vf_1([\omega_{R_2(Z_1)}]-\gl_{11}[\p_{10}])+\sum_{u=2}^w \vf_u [\omega_{R_2(Z_u)}]$, where $\vf_u=0$ or 1 for all $1 \leq u \leq w$ (see, for example, Lemma~\ref{onesidedtype1}).
\end{step}

\begin{step}
  Next, obtain $Y^{\bullet\bullet}$ by deleting rows $a_{w0}+1,\dots, m$ and columns $1,\dots,b_{w0}-1$ of $Y$. Then $Y^{\bullet\bullet}=Z_0\# \cdots \# Z_{w-1}$.  Invert $x_{a_{w0}, 1}$  in $R$.
Under the natural map $\rho^{\bullet\bullet} \colon \Cl(R) \to \Cl(R^{\bullet\bullet})$, we have $[\omega_{R_2(Z_w)}] \mapsto 0$. Induction on $w$ gives $\s_0(R^{\bullet\bullet})=\{\,\sum_{u=0}^{w-1} \te_u [\omega_{R_2(Z_u)}] \mid \te_u = 0 \text{ or } 1\}$.
\end{step}

\begin{step}
   Let $[C]$ be as in Step~\ref{step:getc}. Assume that $\rho^{\bullet\bullet}([C]) \in \s_0(R^{\bullet\bullet})$. We show that we get candidates $\{ [C] \in \Cl(R) \mid [C] \text{ is a possible semidualizing module of }R\}=\{\,\sum_{u=0}^w \te_u [\omega_{R_2(Z_u)}] \mid \te_u = 0 \text{ or } 1\}$.

$(\subseteq)$: We solve the equation
\begin{align*}
  \rho^{\bullet\bullet}([K]+\vf_1([\omega_{R_2(Z_1)}]-\gl_{11}[\p_{10}])+\sum_{u=2}^w \vf_u [\omega_{R_2(Z_u)}])
  &= \sum_{u=0}^{w-1} \psi_u [\omega_{R_2(Z_u)}], \text{ or}\\
   r[\q_{01}]+s[\p_{10}]+\vf_1([\omega_{R_2(Z_1)}]-\gl_{11}[\p_{10}])+\sum_{u=2}^{w-1} \vf_u [\omega_{R_2(L_u)}]
  &= \sum_{u=0}^{w-1} \psi_u [\omega_{R_2(Z_u)}],
\end{align*}
where $\vf_u,\psi_u = 0$ or 1. We need to find the coefficient of $[\omega_{R_2(Z_u)}]$ in $[C]$ as in Lemma~\ref{onesidedtype1}, for example. We show how to find the coefficient of $[\omega_{R_2(Z_1)}]$ in $[C]$ in the case $\vf_1 \neq \psi_1$.

If $\vf_1=0$ and $\psi_1=1$, then $s=\gl_{11}$, $\gl_{1i}=0$ for all $1 \leq i \leq h_1+1$ and $\gd_{1j}=0$ for all $1 \leq j \leq k_1$. So $s=0$, and the coefficient of $[\omega_{R_2(Z_1)}]$ in $[C]$ is 0.

If $\vf_1=1$ and $\psi_1=0$, then $s=0$, $\gl_{1i}=0$ for all $1 \leq i \leq h_1+1$ and $\gd_{1j}=0$ for all $1 \leq j \leq k_1$. So the coefficient of $[\omega_{R_2(Z_1)}]$ in $[C]$ is 0.

The coefficient of $[\omega_{R_2(Z_u)}]$ in $[C]$ is easier to find in all other cases. We get $[C]=\psi_0[\omega_{R_2(Z_0)}] + \sum_{u=1}^{w-1}\min(\vf_u,\psi_u)[\omega_{R_2(Z_u)}] + \vf_w[\omega_{R_2(Z_w)}]$, and certainly $\min(\vf_u,\psi_u) = 0$ or 1.

$(\supseteq)$: If $[D]=\sum_{u=0}^w \te_u [\omega_{R_2(Z_u)}] \in \Cl(R)$, where $\te_u = 0$ or 1, then
$\rho^{\bullet\bullet}([D])= \sum_{u=0}^{w-1} \te_u [\omega_{R_2(Z_u)}] \in \s_0(R^{\bullet\bullet})$. Hence $[D]$ is a possible semidualizing module of $R$.

Corollary~\ref{lowerbound} now shows that all modules in $\{\,\sum_{u=0}^w \te_u [\omega_{R_2(Z_u)}] \mid \te_u = 0 \text{ or } 1\}$ are in fact semidualizing modules of $R$.
\end{step}

  Next, we consider the case when $Z_0,Z_w$ are arbitrary 2-connected ladders, %Assume that we have proved the Theorem
   %for all five other possible combinations of $Z_0$ and $Z_w$, and that we are inducting on $(h_0,h_w) \in \bbn^2$, as in   Proposition~\ref{Ltype1L}, with $h_0,h_w > 0$. The induction is as follows.
assuming that the Theorem holds for all five other possible combinations of $Z_0$ and $Z_w$.  We induct on $(h_0,h_w) \in \bbn^2$, as in Proposition~\ref{Ltype1L}, with $h_0,h_w > 0$.

\setcounter{step}{0}
\begin{step}
  Let $\kappa_2=\max\{j \mid d_{0j} \geq b_{01}\}$.
  Obtain the ladder $Y^{\bullet}$ by deleting rows $1,\dots,a_{01}-1$ and columns $b_{01}+1,\dots, b_{00}$ of $Y$. Then $Y^{\bullet}=Z_0^{\bullet} \# Z_1 \# \cdots \# Z_w$, where $Z_0^{\bullet}$ has one fewer lower inside corner than $Z_0$.  Invert $x_{1, b_{01}}$ in $R$.  Under the natural map $\rho^{\bullet} \colon \Cl(R) \to \Cl(R^{\bullet})$, where $R^{\bullet} = R_2(Y^{\bullet})$, we have $\ker(\rho^{\bullet})=\bbz[\q_{01}]\oplus \bbz[\p_{01}] \oplus \dots \oplus \bbz[\p_{0\kappa_2}]$. Let $K=r[\q_{01}]+ \sum_{j=1}^{\kappa_2}s_j[\p_{0j}]$, where $r,s_j \in \bbz$. Induction on $h_0$ gives $\s_0(R^{\bullet})=\{\,\te_0 [\omega_{R_2(Z_0^{\bullet})}] + \sum_{u=1}^w \te_u [\omega_{R_2(Z_u)}] \mid \te_u = 0 \text{ or } 1\}$. Thus, the possible classes of semidualizing modules of $R$ have the form $[C]= [K]+\vf_0([\omega_{R_2(Z_0)}]-\gl_{01}[\q_{01}]-\sum_{j=1}^{\kappa_2} \gd_{0j}[\p_{0j}])+\sum_{u=1}^w \vf_u [\omega_{R_2(Z_u)}]$, where $\vf_u=0$ or 1 for all $0 \leq u \leq w$, as in Proposition~\ref{Ltype1L}.
\end{step}

\begin{step}
  Let $\kappa_1 = \min\{i \mid c_{wi} \geq a_{w,h_w}\}$. We obtain $Y^{\bullet\bullet}$ by deleting rows $a_{w,h_w}+1,\dots, a_{w,h_w+1}$ and columns $1,\dots,b_{w,h_w}-1$ of $Y$. Then $Y^{\bullet\bullet} = Z_0 \# \cdots \# Z_{w-1} \# Z_w^{\bullet\bullet}$, where $Z_w^{\bullet\bullet}$ is a ladder with $h_w-1$ lower inside corners. Invert $x_{a_{w,h_w}, 1}$ in $R$.  Under the natural map $\rho^{\bullet\bullet} \colon \Cl(R) \to \Cl(R^{\bullet\bullet})$, we have $\ker(\rho^{\bullet\bullet})=\bbz [\q_{w,h_w+1}] \oplus \bbz [\p_{w,\kappa_1}] \oplus \dots \oplus \bbz [\p_{w,k_w}]$. Induction on $h_w$ gives $\s_0 (R^{\bullet\bullet})=\{\,\te_w [\omega_{R_2(Z_w^{\bullet\bullet})}] + \sum_{u=0}^{w-1} \te_u [\omega_{R_2(Z_u)}] \mid \te_u = 0 \text{ or } 1\}$.
\end{step}

\begin{step}
  Assume that $\rho^{\bullet\bullet}([C]) \in \s_0(R^{\bullet\bullet})$. We show that we get candidates $\{ [C] \in \Cl(R) \mid [C] \text{ is a possible semidualizing module of }R\}=\{\,\sum_{u=0}^w \te_u [\omega_{R_2(Z_u)}] \mid \te_u = 0 \text{ or } 1\}$.

$(\subseteq)$: We solve the equation $\rho^{\bullet\bullet}([C])
  = \psi_w [\omega_{R_2(Z_w^{\bullet\bullet})}] + \sum_{u=0}^{w-1} \psi_u [\omega_{R_2(Z_u)}]$, where $\rho^{\bullet\bullet}([C])$ equals
\[
  [K] + \vf_0 \left( [\omega_{R_2(Z_0)}]-\gl_{01}[\q_{01}]-\sum_{j=1}^{\kappa_2} \gd_{0j}[\p_{0j}] \right) + \sum_{u=1}^{w-1} \vf_u [\omega_{R_2(Z_u)}] + \vf_w [\omega_{R_2(Z_w^{\bullet\bullet})}],
\]
and $\vf_u,\psi_u=0$ or 1. Then  $[C]=\psi_0[\omega_{R_2(Z_0)}] + \sum_{u=1}^{w-1}\min(\vf_u,\psi_u)[\omega_{R_2(Z_u)}] + \vf_w[\omega_{R_2(Z_w)}]$ (see Lemma~\ref{lem:ol}), and certainly $\min(\vf_u,\psi_u) = 0$ or 1.

$(\supseteq)$: If $[D]=\sum_{u=0}^w \te_u [\omega_{R_2(Z_u)}] \in \Cl(R)$, where $\te_u = 0$ or 1, then
$\rho^{\bullet\bullet}([D])= \te_w [\omega_{R_2(Z_w^{\bullet\bullet})}] + \sum_{u=0}^{w-1} \te_u [\omega_{R_2(Z_u)}] \in \s_0(R^{\bullet\bullet})$. Hence $[D]$ is a possible semidualizing module of $R$.
\end{step}

Again Corollary~\ref{lowerbound} completes the induction on $w$.
\end{proof}

\begin{cor} \label{2N} For any $N \in \mathbb N$, there exist ladders $Y$ such that $|\mathfrak S_0(R_2(Y))| = 2^N$.  In fact, infinitely many such ladders exist.
\end{cor}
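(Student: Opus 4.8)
The plan is to reduce everything to the Main Theorem (Theorem~\ref{laddertype1s}) together with the Gorenstein classification in Fact~\ref{Gorprop}. The Main Theorem tells us that for a ladder of the form $Y = Z_0 \# \cdots \# Z_w$, with each $Z_u$ a $2$-connected ladder having no coincidental inside corner, the count $|\mathfrak S_0(R_2(Y))|$ equals $2^{\varepsilon_0 + \cdots + \varepsilon_w}$, where $\varepsilon_u = 1$ exactly when $R_2(Z_u)$ fails to be Gorenstein. Thus it suffices, for each target exponent $N$, to assemble a ladder out of such blocks so that exactly $N$ of them are non-Gorenstein.

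The building blocks I would use are rectangular matrices of indeterminates. A matrix $Z$ of size $p \times q$ with $p, q \geq 2$ is a path-connected, $2$-connected ladder with no inside corners whatsoever, so it trivially has no coincidental inside corner and is eligible as a factor $Z_u$. By Fact~\ref{Gorprop}, $R_2(Z)$ is Gorenstein precisely when $p = q$. Hence a non-square block (e.g.\ $2 \times 3$) contributes $\varepsilon_u = 1$, while a square block (e.g.\ $2 \times 2$) contributes $\varepsilon_u = 0$.

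For $N \geq 1$, I would take $Y = Z_0 \# \cdots \# Z_{N-1}$ where every $Z_u$ is a $2 \times 3$ matrix; for $N = 0$, a single $2 \times 2$ matrix does the job. Before invoking the Main Theorem one must confirm that the glued ladder $Y$ really is $2$-connected and that each identification $X_{m_u,1} \sim X_{1,n_{u+1}}$ yields a genuine coincidental inside corner. This is the one point that needs checking, and it is the main (mild) obstacle: one verifies that the shared corner variable sits inside a $2 \times 2$ minor of each of the two adjacent blocks, so that any partition of $Y$ into two subladders with every $2$-minor contained in one piece is forced, via overlapping minors, to keep the two halves together across that variable --- exactly as illustrated by $L_3$ in the Introduction. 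Granting this, Theorem~\ref{laddertype1s} gives $|\mathfrak S_0(R_2(Y))| = 2^N$.

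Finally, for the ``infinitely many'' clause I would simply vary the widths of the blocks: replacing any one of the $2 \times 3$ factors by a $2 \times (3 + \ell)$ matrix for $\ell = 0, 1, 2, \ldots$ keeps that factor non-square, hence non-Gorenstein, so the count remains $2^N$, while the resulting ladders have strictly increasing size and are therefore pairwise non-isomorphic. Equivalently, and this also covers the case $N = 0$, one may append arbitrarily many square blocks of growing size; these carry $\varepsilon_u = 0$ and so leave the product $2^N$ unchanged while producing infinitely many distinct ladders. Either variant yields infinitely many ladders $Y$ with $|\mathfrak S_0(R_2(Y))| = 2^N$, completing the argument.
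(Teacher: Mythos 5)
Your proposal is correct and follows essentially the same route as the paper: build $Y$ as a $\#$-concatenation of $N$ non-square matrix blocks, invoke Fact~\ref{Gorprop} to see each block is non-Gorenstein, and apply Theorem~\ref{laddertype1s} to get $2^N$; the paper merely chooses pairwise distinct dimensions $m_i \neq n_i$ where you fix $2\times 3$ blocks, and obtains the infinitude by varying sizes just as you do. The $2$-connectedness of the glued ladder, which you flag as the point to check, is built into the $\#$ construction and is taken for granted in the paper as well.
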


\begin{proof} Let $N \in \mathbb N$, and for $i = 0, \dots, N-1$, let $m_i, n_i > 1$ be pairwise distinct integers.  Let $Z_i$ be the matrix of variables of size $m_i \times n_i$.  Then each (ladder) determinantal ring $R_2(Z_i)$ is not Gorenstein (see Fact \ref{Gorprop}).  Setting $Y = Z_0 \# \cdots \# Z_{N-1}$, it follows from Theorem \ref{laddertype1s} that $|\frak S_0(R_2(Y))| = 2^N$.  The same result holds for more general ladders: let $Z_i$ be a ladder of size $m_i \times n_i$ where $m_i, n_i$ are (not necessarily distinct) integers greater than 1. If $m_i \neq n_i$, or all inside corners $(r,s)$ of $Z_i$ satisfy $r+s = m_i+1$, then $R_2(Z_i)$ is not Gorenstein, per Fact \ref{Gorprop}.
\end{proof}

\noindent {\bf Example~\ref{type1} (concluded).}  For the ladder $L_3 = Z_0\# Z_1$, of two $3 \times 2$ matrices, in the Introduction, 
 $|\frak S_0(R_2(L_3))| = 4$.  Set $R = R_2(L_3)$.  Then $\frak S_0(R) = \{[R], [(x_{12},x_{13})], [(x_{31},x_{32})],[\omega_R]\}$.

\begin{ex} Finally, for the ladder $Y = L_1 \# L_2 \# L_3$, where the ladders $L_i$ are those from the Introduction, $|\frak S_0(R_2(Y))| = 8$ by Theorem \ref{laddertype1s}.  More specifically, only $R_2(L_2)$ is Gorenstein (see Fact \ref{Gorprop}) and $|\frak S_0(R_2(L_1))| = 2$ by the Two-Sided Ladder Theorem \cite{SWSeSpP1}.  The previous example finishes off the calculation.
  \end{ex}

\providecommand{\bysame}{\leavevmode\hbox to3em{\hrulefill}\thinspace}
\providecommand{\MR}{\relax\ifhmode\unskip\space\fi MR }
% \MRhref is called by the amsart/book/proc definition of \MR.
\providecommand{\MRhref}[2]{%
  \href{http://www.ams.org/mathscinet-getitem?mr=#1}{#2}
}
\providecommand{\href}[2]{#2}

\end{document}